\newtheorem{thm}{Theorem} \newtheorem{pro}[thm]{Proposition}
\newtheorem{lm}[thm]{Lemma}
\newtheorem{cor}[thm]{Corollary} 
\theoremstyle{definition}
\newtheorem{df}[thm]{Definition} 
\newtheorem*{notation}{Notation}
\newtheorem*{ex}{Example}
\newtheorem*{clm}{Claim}
\newtheorem{rmk}[thm]{Remark}
\DeclareMathAlphabet{\mathpzc}{OT1}{pzc}{m}{it}
\DeclareMathOperator*{\spec}{Spec}
\DeclareMathOperator*{\Gal}{Gal} \DeclareMathOperator*{\Ind}{\mathrm Ind}
\DeclareMathOperator*{\rank}{rank}
\DeclareMathOperator*{\normal}{\trianglelefteq}
\DeclareMathOperator*{\image}{im}
\DeclareMathOperator*{\into}{\hookrightarrow}
\newcommand{\RR}{\mathbb{R}} 
\newcommand{\ZZ}{\mathbb{Z}} \newcommand{\Aff}{\mathbb{A}}
\newcommand{\CC}{\mathbb{C}} 
\newcommand{\PP}{\mathbb{P}}
\newcommand{\cO}{\mathcal{O}} 
\newcommand{\cE}{\mathcal{E}} 
 \newcommand{\cK}{\mathcal{K}}
\newcommand{\scrY}{\mathcal{Y}}
 \newcommand{\onto}{\twoheadrightarrow}
\newcommand{\Z}{\mathbb{Z}}
\newcommand{\caa}{\mathcal{A}}\newcommand{\cab}{\mathcal{B}}\newcommand{\cac}{\mathcal{C}}\newcommand{\cak}{\mathcal{K}}\newcommand{\cam}{\mathcal{M}}\newcommand{\cao}{\mathcal{O}}\newcommand{\cas}{\mathcal{S}}
\newcommand{\frF}{\mathfrak{F}}\newcommand{\frG}{\mathfrak{G}}\newcommand{\frp}{\mathfrak{p}}
\begin{document}
\title{Embedding problems for open subgroups of the fundamental group}
\author{Manish Kumar} \address{
  Statistics and Mathematics Unit\\
  Indian Statistical Institute, \\
  Bangalore, India-560059
    } \email{manish@isibang.ac.in} 
\begin{abstract}
  Let $C$ be a smooth irreducible affine curve $C$ over an
  algebraically closed field of positive characteristic and let
  $\pi_1(C)$ be its fundamental group. We study various embedding problems 
  for $\pi_1(C)$ and its subgroups.
\end{abstract}
\date{} \maketitle

\section{Introduction}
Let $C$ be a smooth irreducible affine curve over an algebraically
closed field $k$ of characteristic $p>0$. The existence of wild ramification causes the structure of the \'etale fundamental group $\pi_1(C)$ to be complicated. There has been some attempts to understand this group. One step towards this goal was Raynaud's \cite{Ray} and Harbater's \cite{ha94} proof of the Abhyankar's conjecture which states that a finite group $G$ is a quotient of $\pi_1(C)$ if and only if the maximal prime-to-$p$ quotient $G/p(G)$ of $G$ is generated by $2g+r-1$ elements where $g$ is the genus of the smooth completion of $C$ and $r$ is the number of points in the boundary.

Though this gives a complete description of the finite quotients of $\pi_1(C)$, it does not say how these groups fit together in the inverse system for $\pi_1(C)$. A possible way to understand this is by analyzing which finite embedding problems for $\pi_1(C)$ have a solution (see the ``Notation'' subsection at the end of this introduction for definition). One crucial result in this direction is by Pop \cite{pop} (and independently proved by Harbater as well \cite[Theorem 5.3.4]{ha03}) which says that given an embedding problem  $\cE=(\alpha:\Pi \to G, \phi:\Gamma \to G)$ where $H = \ker(\phi)$ is a quasi-$p$ group, there is a proper solution $\lambda: \Pi \twoheadrightarrow \Gamma$ to $\cE$. This clearly strengthens Raynaud's and Harbater's result.
When $H$ is not a quasi-$p$ group then it is clear from the Abhyankar's conjecture that the above embedding problem may not have a solution. But there are finite index open subgroups of $\pi_1(C)$ for which these embedding problems have a solution.

\begin{df}
  Given an embedding problem $\cE=(\alpha:\pi_1(C) \to G, \phi:\Gamma \to G)$ and a finite index subgroup
  $\Pi$ of $\pi_1(C)$, we say the embedding problem \emph{restricts} to $\Pi$
  if $\alpha(\Pi)=G$.  Moreover if the restricted embedding problem
  has a solution then we say $\Pi$ is \emph{effective} for the
  embedding problem $\cE$.
\end{df}

In \cite{HS}, it was shown that given any finite embedding problem for $\pi_1(C)$ 
there exist a finite index effective subgroup for the embedding problem. In 
\cite[Theorem 1.3]{BK} it was shown that it is even possible to find an index $p$ 
subgroup of $\pi_1(C)$ which is effective for the embedding problem. 
Our objective is to find some necessary and some sufficient conditions for a 
subgroup of $\pi_1(C)$ to be an effective subgroup for a given embedding problem.

Suppose $\Pi$ is a finite index subgroup of $\pi_1(C)$. 
This corresponds to a cover $D \to C$.
Let $Z \to X$ be the corresponding morphism between their smooth completions and let $n_D$ denote $2g_Z+r_D-1$. 
A necessary condition for $\Pi$ to be effective for the embedding problem $\cE=(\alpha:\pi_1(C) \to G, \phi:\Gamma \to G)$ is that $\cE$ restricts to $\Pi$ and the rank of $\Gamma/p(\Gamma)$ is at most $n_D$. Indeed $\Pi$ is effective implies there is a surjection from $\pi_1(D)=\Pi\onto \Gamma$. Hence there is a surjection from prime-to-$p$ part of $\pi_1(D)$ to the maximal prime-to-$p$ quotient  of $\Gamma$, denoted $\Gamma/p(\Gamma)$. So $\Gamma/p(\Gamma)$ is generated by $n_D$ elements.  Also, \cite[Theorem 5]{HS} can be rephrased in terms of sufficient conditions for $\Pi = \pi_1(D)$ to be an effective subgroup of $\pi_1(C)$.

\begin{pro}\label{relativerank}
  Let $C\subset X$, $\Pi$, $\cE$, and $D \subset Z$ be as above. Let $\Psi_X:V_X\to X$ be the
  $G$-cover corresponding to the embedding problem $\cE$ and
  assume $\cE$ restricts to $\Pi$. Let $\tilde \Psi:V_X \times_X Z \to Z$ be the pull-back of $\Psi_X$. 
  Then $\Pi$ is effective if the number of points in $Z \setminus D$ where the morphism $\tilde \Psi$ is not branched, is at least the relative rank of $\ker(\phi:\Gamma \to G)$ in $\Gamma$.  
\end{pro}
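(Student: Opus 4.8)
The plan is to reduce the assertion to the construction of a single connected $\Gamma$-Galois cover of $Z$ and then to build that cover by formal patching over $Z$, using the unramified points $z_i$ as the sites at which to install the ``missing'' ramification. Since $\cE$ restricts to $\Pi$, the normalization $V_Z\to Z$ of $\tilde\Psi\colon V_X\times_X Z\to Z$ restricts over $D$ to a \emph{connected} $G$-Galois cover $V_D\to D$, with $\pi_1(V_D)=\ker(\alpha|_\Pi)$. A proper solution of the restricted embedding problem is the same datum as a connected $\Gamma$-Galois cover $W\to Z$, unramified over $D$, together with a $G$-equivariant isomorphism $W/H\cong V_Z$ over $Z$: then $W|_D\to D$ corresponds to a surjection $\lambda\colon\pi_1(D)\onto\Gamma$ with $\phi\lambda=\alpha|_\Pi$, and conversely. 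So it is enough to construct such a $W$. Write $m$ for the relative rank of $H=\ker\phi$ in $\Gamma$ and choose, as its definition permits, $h_1,\dots,h_m\in H$ whose $\Gamma$-normal closure $N$ has the property that $H/N$ is a quasi-$p$ group. If $z_1,\dots,z_m\in Z\setminus D$ are points at which $\tilde\Psi$ is unramified, then $V_X\times_X Z$ is smooth over $k$, hence normal, near each $z_i$, so $V_Z\to Z$ is \'etale over each $z_i$ and $\alpha|_\Pi$ is unramified there.

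The first stage handles the quasi-$p$ part with no new branch points. The embedding problem $(\alpha|_\Pi\colon\pi_1(D)\to G,\ \Gamma/N\to G)$ has kernel $H/N$, which is quasi-$p$, so by Pop's theorem (recalled in the introduction) it has a proper solution; and since $\alpha|_\Pi$ factors through $\pi_1(D\cup\{z_1,\dots,z_m\})$, applying Pop's theorem on that affine curve yields such a solution $\mu\colon\pi_1(D)\onto\Gamma/N$ which is moreover unramified over every $z_i$. Thus $\mu$ gives a connected $\Gamma/N$-Galois cover $\bar Y\to Z$, unramified over $D$ and over every $z_i$, with $\bar Y/(H/N)\cong V_Z$.

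The second stage constructs $W\to Z$ with $W/N\cong\bar Y$ by patching over $Z$. Let $B\subset (Z\setminus D)\setminus\{z_1,\dots,z_m\}$ be the branch locus of $\bar Y\to Z$ and $U=Z\setminus(B\cup\{z_1,\dots,z_m\})\supseteq D$. The patching data, all of them $N$-covers of the appropriate restrictions of $\bar Y$, are: over the formal disc $\Delta_{z_j}$, on which $\bar Y$ is a disjoint union of $|\Gamma/N|$ copies of $\Delta_{z_j}$, the $\Gamma$-cover induced from the connected $\langle h_j\rangle$-Galois cover of $\Delta_{z_j}$ totally ramified at the centre (available because the cyclic group $\langle h_j\rangle$ is a product of a $p$-group and a prime-to-$p$ cyclic group, each realizable over the disc); over the formal disc at each $b\in B$, a $\Gamma$-lift of $\bar Y|_{\Delta_b}$ (available because $k((t))$ has $\cd\le 1$ for $k$ algebraically closed); and over $U$, a $\Gamma$-cover $W_U\to U$ with $W_U/N\cong\bar Y|_U$ matching the preceding data on the punctured discs (available because $\pi_1(U)$ is projective, the fundamental group of a smooth affine curve over an algebraically closed field having $\cd\le 1$). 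Formal GAGA over $Z$ glues these into a $\Gamma$-cover $W\to Z$ with $W/H\cong V_Z$, branched only over $B\cup\{z_1,\dots,z_m\}\subseteq Z\setminus D$, hence unramified over $D$. It remains to arrange that $W$ is \emph{connected}, i.e.\ that the monodromy of the $N$-cover $W\to\bar Y$ is all of $N$: over each $z_j$ the $|\Gamma/N|$ discs of $\bar Y$ carry inertia groups that are $N$-conjugates of $\langle h_j\rangle$ whose precise $\Gamma$-conjugacy classes depend on the choice of $W_U$, and one must choose $W_U$ so that all these inertia groups together generate $N$; this is possible because $h_1,\dots,h_m$ normally generate $N$ in $\Gamma$ and the $|\Gamma/N|$ discs over each $z_j$, together with the freedom in the choice of $W_U$, provide access to enough conjugates. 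Granting this, $W$ is a connected $\Gamma$-Galois cover of $Z$ unramified over $D$ with $W/H\cong V_Z$, which produces the desired proper solution, so $\Pi$ is effective.

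Stage one and the mere existence of the local and global lifts are routine; the step I expect to be the main obstacle is the last one — choosing the open-part cover $W_U$ (which exists by projectivity of $\pi_1(U)$, but is far from unique) so that, after patching with the prescribed $\langle h_j\rangle$-ramified covers at the $z_j$, the resulting $\Gamma$-cover is simultaneously unramified over $D$, compatible on the punctured discs, and connected. Controlling the $\Gamma$-conjugacy classes of the installed inertia through this choice is the delicate point, and is in essence the content of \cite[Theorem~5]{HS} of which Proposition~\ref{relativerank} is the rephrasing.
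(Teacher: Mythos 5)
The paper does not actually prove Proposition \ref{relativerank}; it is presented as a restatement of \cite[Theorem 5]{HS}, so your argument is really a reconstruction of that result. It breaks at its first group-theoretic step: you assert that the definition of relative rank permits choosing relative generators $h_1,\dots,h_m \in H$ whose normal closure $N$ in $\Gamma$ satisfies that $H/N$ is quasi-$p$. That is false. Take a prime $\ell \neq p$, $\Gamma = \ZZ/\ell^2\ZZ \times \ZZ/\ell\ZZ$, $G = \ZZ/\ell\ZZ$ and $H = \ell\ZZ/\ell^2\ZZ \times \ZZ/\ell\ZZ \cong (\ZZ/\ell\ZZ)^2$. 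Then $\rank_\Gamma(H) = 1$: the singleton $\{(0,1)\}$ is a relative generating set because any $T$ with $\langle H \cup T\rangle = \Gamma$ must contain an element of order $\ell^2$, and the rank is not $0$ since $H \not\subseteq \Phi(\Gamma)$. But every one-element subset of $H$ generates a normal subgroup $N$ of order at most $\ell$, so $H/N$ always has $\ZZ/\ell\ZZ$ as a quotient and is never quasi-$p$. (The degenerate case $\rank_\Gamma(H)=0$, e.g.\ $\Gamma = \ZZ/\ell^2\ZZ \onto G = \ZZ/\ell\ZZ$ with $H \subseteq \Phi(\Gamma)$, fails even more simply: $N$ is trivial and $H$ is not quasi-$p$.) Consequently your Stage 1 cannot invoke Pop's theorem, and the two-stage plan ``quasi-$p$ quotient first, then patch in $N$'' collapses.

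The argument of \cite{ha99}/\cite{HS} runs the decomposition the other way, exactly as in the proof of Theorem \ref{EPsoln_eff_degeneration}: the quasi-$p$ step comes last, not first. One first passes to $\bar\Gamma = \Gamma/p(H)$, where the images of the relative generators still form a relative generating set for $\bar H = H/p(H)$ and now have prime-to-$p$ order, hence are realizable as tame cyclic inertia; one uses projectivity of $\pi_1(D)$ to produce a weak solution with image $\Gamma' \leq \bar\Gamma$ satisfying $\Gamma'\bar H = \bar\Gamma$, so that the defining property of a relative generating set gives $\langle \Gamma', h_1,\dots,h_m\rangle = \bar\Gamma$; one then patches $\Ind_{\Gamma'}^{\bar\Gamma}$ of the connected $\Gamma'$-cover over the open part against the cyclic $\langle h_j\rangle$-covers at the $z_j$. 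Connectedness and domination of $V_Z$ are then automatic (from $\Gamma'\bar H = \bar\Gamma$ one gets $\Ind_{\Gamma'}^{\bar\Gamma}(W')/\bar H \cong W'/(\bar H\cap\Gamma') \cong V_Z$), and the patching data agree on the punctured discs by construction because everything there is induced from trivial covers. Finally Pop's theorem is applied to the quasi-$p$ kernel $p(H)$ of $\Gamma \to \bar\Gamma$. This also disposes of the two points you flag but do not resolve: the compatibility of your $W_U$ with the prescribed local covers (which you assert rather than arrange) and the control of the conjugacy classes of the installed inertia needed for connectedness.
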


The subset of $Z \setminus D$ for which the morphism $\Psi$ is unramified can be thought of as points available for branching in any cover dominating $\Psi$.  In the theorem above, having sufficiently many available branch points allows the embedding problem to be solved.  A natural question is can the condition on the number of such potential branch points in $Z \setminus D$ be relaxed?  For example, could it be replaced by a condition on the genus of $Z$ or $n_D$? It is also worth noting that in the above proposition the degree of the cover $Z\to X$ must be large to ensure that the number of points in $Z\setminus D$ where the morphism $\tilde \Psi$ is not branched is sufficiently large. Hence the index of $\Pi$ in $\pi_1(C)$ is also large. 

The two main results of this paper, Theorem \ref{main.result-affine.line} and  Proposition  \ref{degeneration.highergenus},  use genus and branch points to obtain effective subgroups.  Let $H$ be the kernel of the homomorphism $\phi: \Gamma \to G$ from the embedding problem $\cE = (\alpha:\pi_1(C) \to G, \phi:\Gamma \to G)$, and let $H/p(H)$ be the maximal prime to $p$ quotient of $H$.   The main tool in this paper is Theorem \ref{EPsoln_eff_degeneration}  which investigates the relationship between solving embedding problems for $\pi_1(C)$ and the relative rank of $H/p(H)$ in $\Gamma/p(H)$. Roughly speaking, it shows that if the $G$-Galois cover $\Psi_X:V_X \to X$ corresponding to $\alpha$ has a deformation which is sufficiently degenerate (in terms of having many components that are trivial $G$ covers), then there exists a proper solution to $\cE$. 
Theorem \ref{main.result-affine.line} restricts to the case that $C$ is the affine line and $\Pi$ has index $p$ in $\pi_1(\Aff^1_k)$.  It uses degenerations and  Theorem \ref{EPsoln_eff_degeneration} to show that if the curve $Z$ in the $\Z/p\Z$-cover $Z \to \PP^1_k$ corresponding to $\Pi \subset \pi_1(\Aff^1_k)$ has genus $g_Z$ greater than the relative rank of $(H/pH)$ in $\Gamma/p(H)$ and a technical condition which holds for most values of $g_Z$ (see Corollary \ref{cor1:main.result-affine.line}, Corollary \ref{cor2:main.result-affine.line} and Remark \ref{rmk:main.result-affine.line}) then $\Pi$ is effective for $\cE$.  
 In particular these results provide sufficient conditions for subgroups of $\pi_1(C)$ to be effective. In Section \ref{existdegens} some of the results proved for affine line case are generalized to general curves though the conclusions obtained are slightly weaker (Proposition \ref{degeneration.highergenus} and Corollary \ref{general.case}).

The techniques involved in proving these results include formal patching and deformations within families of covers.  Each result requires the construction of a Galois cover of a degenerate curve.  Then a formal patching argument is used to obtain a cover of smooth curves over a complete local ring.  Finally, a deformation argument similar to \cite[Proposition 4]{HS}  is used to ensure that the original embedding problem $\cE$ restricts the subgroup $\Pi$ of $\pi_1(C)$ that is obtained in the construction.  The paper is organized as follows: 
At the end of the Introduction there is a list of notation.   Section \ref{GroupTheory} looks at some of the group theoretic properties and examples of effective groups.   
Section \ref{Formal Patching}  defines deformations and degenerations of covers and uses a formal patching result of Harbater to solve the embedding problems in the presence of a sufficiently degenerate deformation of the original cover.  
Section \ref{LATrick} proves a globalization and specialization result using the Lefschetz type principle and Abhyankar's lemma.  
Section \ref{degenerates} applies the formal patching and Lefschetz-Abhyankar result to obtain Theorem \ref{EPsoln_eff_degeneration}.
Finally in Section \ref{Affine Line}, Theorem \ref{main.result-affine.line} is proved, and in Section \ref{existdegens}, Proposition \ref{degeneration.highergenus} and Corollary \ref{general.case} is proved.

\medskip

\begin{notation}
If $G$ is a finite group and $p$ is a prime number, let $p(G)$ denote the subgroup of $G$ generated by its $p$-subgroups.  
This is a characteristic subgroup of $G$, and $G/p(G)$ is the maximal prime-to-$p$ quotient of $G$. 
A finite group $G$ is called {\it quasi-$p$} if $G = p(G)$.

Let $\Gamma$ be any finite group  and let $H$ be a subgroup of $\Gamma$.
A subset $S \subset H$ will be called a {\it relative generating set} for $H$ in $\Gamma$ if for every subset $T \subset \Gamma$ such that $H \cup T$  generates $\Gamma$, the subset $S \cup T$ also generates $\Gamma$.
We define {\it the relative rank of $H$ in $\Gamma$} to be the smallest non-negative integer $\mu:=\rank_\Gamma(H)$ such that there is a relative generating set for $H$ in $\Gamma$ consisting of $\mu$ elements.  
Every generating set for $H$ is a relative generating set, so $0 \leq \rank_\Gamma(H) \leq \rank(H).$
Also, $\rank_\Gamma(H) = \rank(H)$ if $H$ is trivial or $H = \Gamma$, while $\rank_\Gamma(H) = 0$ if and only if $H$ is contained in the Frattini subgroup $\Phi(\Gamma)$ of $\Gamma$ \cite[p.~122]{ha99}.  

A {\it finite embedding problem} $\cE$ for a group $\Pi$ is a pair of surjections $(\alpha:\Pi \to G, \phi:\Gamma \to G)$, where $\Gamma$ and $G$ are finite groups.  If $H = \ker(\phi)$, the embedding problem $\cE$ can be summarized by

 \centerline{ \xymatrix{
    &          &               &\Pi \ar[d]^{\alpha} \ar @{-->} [dl]\\
    1\ar[r] & H \ar[r] & \Gamma \ar[r]^{\phi} & G \ar[r]\ar[d] & 1\\
    & & & 1 }}

\smallskip \noindent A {\it weak solution} to $\mathcal{E}$ is a homomorphism $\gamma: \Pi \to \Gamma$ such that $\phi \circ \gamma = \alpha$.  We call $\gamma$
a {\it proper solution} to $\mathcal E$ if in addition it is surjective.

\begin{rmk}\label{split}Let $\Pi$ be a profinite group and let $\cE = (\alpha:\Pi \to G, \phi:\Gamma \to G)$ be a finite embedding problem for $\Pi$.  Suppose that the epimorphism $\alpha:\Pi \to G$ factors as $r\alpha'$, where $\alpha':\Pi \to G'$ and $r:G'\to G$ are 
epimorphisms, for some finite group $G'$.  We consider the {\it induced embedding problem} $\cE_{\alpha'} = (\alpha':\Pi \to G', \phi':\Gamma' \to G')$ by taking $\Gamma' = \Gamma \times_G G'$ and letting $\phi':\Gamma' \to G'$ be the second projection map.  Here 
$\phi'$ is surjective because $\phi$ is; and so $\cE_{\alpha'}$ is a finite embedding problem.  Here $\cE$ and $\cE_{\alpha'}$ have isomorphic kernels; indeed $\ker(\phi') = \ker(\phi) \times 1 \subset \Gamma \times_G G' = \Gamma'$.  Note that the first projection map $q:\Gamma' = \Gamma \times_G G' \to \Gamma$ is surjective since $r:G' \to G$ is surjective; and $\phi q = r \phi'$.   

In this situation, every proper solution $\lambda':\Pi\to \Gamma'$ of $\cE_{\alpha'}$ induces a proper solution $\lambda := q\lambda':\Pi \to \Gamma$ of $\cE$; viz.\ $\phi\lambda = \phi q \lambda' = r \phi' \lambda' = r\alpha' = \alpha$, and 
$\lambda$ is surjective because $q$ and $\lambda'$ are.  So we obtain a map ${\rm PS}(\cE_{\alpha'}) \to {\rm PS}(\cE)$, where $\rm PS$ denotes the set of proper solutions to the embedding problem.  
\end{rmk}
In this paper we consider curves over an algebraically closed field $k$ of characteristic $p>0$.  
A {\it cover} of $k$-curves is a morphism $\Phi:D \to C$ of smooth connected $k$-curves that is finite and generically separable.  
If $\Phi:D \to C$ is a cover, its \emph{Galois group} $\mathrm{Gal}(D/C)$ is the group of $k$-automorphisms $\sigma$ of $D$ satisfying $\Phi \circ \sigma = \Phi$.  
If $G$ is a finite group, then a $G$-\emph{Galois cover} is a cover $\Phi: D \to C$ \emph{together} with an inclusion $\rho: G \hookrightarrow \mathrm{Gal}(D/C)$ such that $G$ acts simply transitively on a generic geometric fiber of $\Phi:D \to C$.  If we fix a geometric point of $C$ to be a base point, then the pointed $G$-Galois \'etale covers of $C$ correspond bijectively to the surjections $\alpha: \pi_1(C) \to G$, where $\pi_1(C)$ is the algebraic fundamental group of $C$ with the chosen geometric point as the base point.  
The proper solutions to an embedding problem $\cE = (\alpha:\pi_1(C) \to G, \phi:\Gamma \to G)$ for $\pi_1(C)$ then are in bijection to the pointed $\Gamma$-Galois covers $E \to C$ that dominate the pointed $G$-Galois cover $\Phi: D \to C$ corresponding to $\alpha$.
In the case that $X$ is the smooth completion of the affine $k$-curve $C$, denote by $g_X$ the genus of $X$, and define $r_C = \#(X - C)$ and $n_C = 2g_X+r_C-1$. 
\end{notation}

\section*{Acknowledgments}
The author thanks David Harbater and Kate Stevenson for a number of useful discussions which led to some of the ideas in the paper. There were also many suggestions from them which improved the presentation of this paper.  

\section{Group theory results and examples of effective subgroups}\label{GroupTheory}

In this section we analyze when a subgroup of an effective subgroup for an embedding problem is effective using some group theory and Galois theory.

We start with an embedding problem $\cE$\eqref{EP} for $\Pi$.
\begin{equation}\label{EP}
  \xymatrix{
    &          &               &\Pi \ar[d]^{\alpha} \ar @{-->} [dl]\\
    1\ar[r] & H \ar[r] & \Gamma \ar[r]^{\phi} & G \ar[r]\ar[d] & 1\\
    & & & 1 }
\end{equation}

The following remark is easy to see.
\begin{rmk} \label{rmk:subgroup} Suppose $\Pi_1$ is an effective subgroup for the embedding
  problem $\cE$\eqref{EP} and $\Pi_2$ is a finite index subgroup of $\Pi_1$. Suppose for 
  some solution $\psi$ of the embedding problem $\cE$\eqref{EP} restricted to $\Pi_1$,
  the index of $\ker(\psi)\cap \Pi_2$ in $\Pi_2$ is $|\Gamma|$ then $\Pi_2$
  is also an effective subgroup for $\cE$\eqref{EP}.
  Note that $\ker(\psi|_{\Pi_2})=\ker(\psi)\cap \Pi_2$ has index $|\Gamma|$ in $\Pi_2$. Hence $\psi|_{\Pi_2}$ indeed surjects onto $\Gamma$. 
\end{rmk}

\begin{cor} \label{subgroup} Suppose $\Pi_1$ is an effective subgroup for the embedding
  problem $\cE$\eqref{EP} and $\Pi_2$ is a finite index subgroup of $\Pi_1$
  such that the embedding problem $\cE$\eqref{EP} restricts to $\Pi_2$ and
  $[\Pi_1:\Pi_2]$ is coprime to $|H|$ then $\Pi_2$ is also effective for
  $\cE$\eqref{EP}.
\end{cor}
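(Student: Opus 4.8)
The plan is to reduce to Remark~\ref{rmk:subgroup}: it suffices to produce a proper solution of $\cE$\eqref{EP} restricted to $\Pi_1$ whose restriction to $\Pi_2$ is still surjective onto $\Gamma$. So first I would use the hypothesis that $\Pi_1$ is effective to fix a proper solution $\psi\colon\Pi_1\onto\Gamma$ of the restricted problem, so that $\phi\circ\psi=\alpha|_{\Pi_1}$. Restricting to $\Pi_2$, set $\Gamma_2:=\psi(\Pi_2)\leq\Gamma$. Since $\cE$\eqref{EP} restricts to $\Pi_2$ we have $\phi(\Gamma_2)=\alpha(\Pi_2)=G$, and as $H=\ker\phi$ this says $\Gamma_2 H=\Gamma$.

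The heart of the argument is to show $\Gamma_2=\Gamma$, and this is where the coprimality hypothesis enters. On one hand, $\Gamma_2 H=\Gamma$ together with the second isomorphism theorem gives $[\Gamma:\Gamma_2]=[H:\Gamma_2\cap H]$, so $[\Gamma:\Gamma_2]$ divides $|H|$. On the other hand, since $\psi$ is surjective, $\psi^{-1}(\Gamma_2)=\Pi_2\cdot\ker\psi$ is a subgroup lying between $\Pi_2$ and $\Pi_1$, so $[\Gamma:\Gamma_2]=[\Pi_1:\Pi_2\ker\psi]$ divides $[\Pi_1:\Pi_2]$, which is coprime to $|H|$. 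A positive integer dividing both $|H|$ and an integer coprime to $|H|$ equals $1$; hence $\Gamma_2=\Gamma$.

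Consequently $\psi|_{\Pi_2}\colon\Pi_2\onto\Gamma$ is surjective and still satisfies $\phi\circ(\psi|_{\Pi_2})=\alpha|_{\Pi_2}$, so it is already a proper solution of $\cE$\eqref{EP} restricted to $\Pi_2$; equivalently $\ker(\psi|_{\Pi_2})=\ker\psi\cap\Pi_2$ has index $|\Gamma|$ in $\Pi_2$, so Remark~\ref{rmk:subgroup} applies and $\Pi_2$ is effective for $\cE$\eqref{EP}. I do not expect a genuine obstacle here: the only point needing care is the divisibility bookkeeping in the middle paragraph (keeping straight that one index divides $|H|$ while the other divides $[\Pi_1:\Pi_2]$), and the rest is formal manipulation of the chosen solution.
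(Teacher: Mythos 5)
Your proof is correct and takes essentially the same route as the paper: fix a proper solution $\psi$ of $\cE$\eqref{EP} restricted to $\Pi_1$, use the coprimality of $[\Pi_1:\Pi_2]$ and $|H|$ to show that $\psi|_{\Pi_2}$ is still surjective onto $\Gamma$, and conclude via Remark \ref{rmk:subgroup}. The only cosmetic difference is that you run the index count downstairs in $\Gamma$ (showing $[\Gamma:\psi(\Pi_2)]$ divides both $|H|$ and $[\Pi_1:\Pi_2]$), whereas the paper runs the equivalent count upstairs among the kernels $\ker(\alpha|_{\Pi_i})$ and $\ker(\psi)$ inside $\Pi_1$.
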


\begin{proof}
Let $\psi$ be a solution to the embedding problem $\cE$\eqref{EP} restricted to $\Pi_1$. 
Note that $\ker(\psi)\subset \ker(\alpha|_{\Pi_1})$ and $[\ker(\alpha|_{\Pi_1}):\ker(\psi)]=|H|$.
Since $[\Pi_1:\Pi_2]$ is coprime to $|H|$, $[\ker(\alpha|_{\Pi_1}):\ker(\alpha|_{\Pi_2})]$ is also
coprime to $|H|$. So the index $[\ker(\alpha|_{\Pi_2}):\ker(\psi)\cap\ker(\alpha|_{\Pi_2})]=|H|$.
The embedding problem restricts to $\Pi_2$ means that $[\Pi_2:\ker(\alpha|_{\Pi_2})]=|G|$.
So we obtain that $[\Pi_2: \ker(\psi)\cap\ker(\alpha|_{\Pi_2})]=|G||H|=|\Gamma|$. Also note
that $\ker(\psi)\cap\ker(\alpha|_{\Pi_2})=\ker(\psi)\cap \ker(\alpha|_{\Pi_1})\cap \Pi_2
=\ker(\psi)\cap \Pi_2$. Hence the result is obtained from the previous remark.
\end{proof}

In the above corollary the hypothesis guaranteed that if $\Pi_1$
is an effective subgroup for the given embedding problem and $\psi$
is a solution of the embedding problem restricted to $\Pi_1$ then 
$\psi|_{\Pi_2}$ is a solution to the embedding problem restricted to 
$\Pi_2$. Hence $\Pi_2$ is also an effective subgroup. But this does not hold 
unconditionally as the following examples show.

\begin{ex}
Let $\Pi$ be the absolute Galois group of the reals, let $\Gamma=H=\ZZ/2\ZZ$, 
and let $G=1$.  Then the given embedding problem has a proper solution 
(the complex numbers). So $\Pi_1=\Pi$ itself is effective. But if we pull back from $\RR$ to $\CC$ 
(corresponding to taking the trivial subgroup $\Pi_2$ of $\Pi$) then the 
embedding problem (which has trivial cokernel) restricts to $\Pi_2$. But it no longer has a proper solution. Note here $|H|=[\Pi:\Pi_2]=2$.
\end{ex}

On the contrary, in geometric setting, even if the given solution does not 
pull back to a proper solution, there might be some other proper 
solution over the pullback.

\begin{ex}
Let $C$ be the affine $x$-line minus 0 in characteristic 0, let $\Pi = \pi_1(C)$, 
$\Gamma=\ZZ/2\ZZ \times \ZZ/3\ZZ$, $H=\ZZ/3\ZZ$, and $ G=\ZZ/2\ZZ$.  Then there is a G-cover $C_1$ 
of $C$ given by $y^2=x$.  Over that there's a proper solution $D$ to 
the $\cE$\eqref{EP}, given by $z^3=y$. So again $\Pi_1=\Pi$ is an effective subgroup of $\Pi$ for $\cE$\eqref{EP}. Here $D$ is the fiber product of $C_1$ with 
the curve $C_2$ given by $w^3=x$ (where $w=z^2$, and $z=y/w$).  Now pull 
back everything by the degree 3 cover $C_2$ of $C$.  This is linearly 
disjoint from $C_1$, so $\cE$\eqref{EP} restricts to the subgroup $\pi_1(C_2)$ of $\Pi$.  But the 
degree is not relatively prime to $|H|$, the cover $C_2$ is not linearly 
disjoint from $D$, and the solution to the given $\cE$\eqref{EP} does not restrict to a proper solution to $\cE$\eqref{EP} restricted to $\pi_1(C_2)$. But $\cE$\eqref{EP} restricted to $\pi_1(C_2)$ does have a proper solution, given by $v^3=z$.
\end{ex}

\section{Formal Patching Results}\label{Formal Patching}
In this section we develop some formal patching results which are used in later sections to find solutions to various embedding problems. Proposition \ref{patchKumar} is the main result of this section and one of the main technical results of this paper.

\begin{notation} Given a scheme $X$, denote by $\cam(X)$ the category of coherent sheaves of $\cao_X$-modules, $\caa\cam(X)$ the category of coherent sheaves of $\cao_X$-algebras and $\cas\cam(X)$ the subcategory of $\cam(X)$ for which the sheaves of algebras are generically separable and locally free. Given a finite group $G$ denote by $G\cam(X)$ the category of generically separable coherent locally free sheaves of $\cao_X$-algebras $S$ together with a $G$-action which is transitive on the geometric generic fibers of $\spec_{\cao_X}(S)\to X$. Given categories $\caa,\cab,\cac$ and functors $\frF:\caa\to\cac$ and $\frG:\cab\to\cac$, denote by $\caa\times_{\cac}\cab$ the associated fiber category.\end{notation}  
The following result is due to Harbater \cite[Theorem 3.2.12]{ha03}.

\begin{thm}\label{genpatch} 
Let $(A,\frp)$ be a complete local ring and $T$ a proper $A$-scheme. Let $\{\tau_1,\cdots,\tau_N\}$ be a finite set of closed points of $T$ and $U=T\setminus\{\tau_1,\cdots,\tau_N\}$. Denote by $\hat{\cao}_{T,\tau_i}$ the completion of the local ring $\cao_{T,\tau_i}$ and let ${T}_i=\spec(\hat{\cao}_{T,\tau_i})$. Let $U^*$ be the $\frp$-adic completion of $U$ and $\cak_i$ the $\frp$-adic completion of $T_i\setminus\{\tau_i\}$. Then the base change functor
$$\cam(T)\to\cam(U^*)\times_{\cam\left(\bigcup_{i=1}^N\cak_i\right)}\cam\left(\bigcup_{i=1}^N{T}_i \right)$$
is an equivalence of categories. The same remains true if we replace $\cam$ by $\caa\cam$, $\cas\cam$ or $G\cam$ for a fixed finite group $G$. \end{thm}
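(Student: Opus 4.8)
The statement is precisely \cite[Theorem 3.2.12]{ha03}, so the plan is to explain how it is obtained from Harbater's formal patching machinery rather than to reprove it from scratch. The strategy is a standard ``formal GAGA plus descent'' argument. First I would set up the glueing data: over $U^*$, over the $T_i$, and over the overlaps $\cak_i$, a coherent module (resp.\ algebra, resp.\ sheaf with $G$-action) is specified, together with isomorphisms over each $\cak_i$ identifying the restrictions coming from $U^*$ and from $T_i$. The goal is to manufacture a unique coherent sheaf on $T$ inducing this data compatibly. Faithfulness of the base change functor is the easy direction: a morphism of coherent sheaves on $T$ that becomes zero after completing along $\frp$ and after completing at each $\tau_i$ is already zero, because $T$ is proper over the complete local ring $(A,\frp)$ and these completions jointly detect sections (this is the formal function theorem / a Krull-intersection argument on the proper scheme $T$).

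The substantive point is essential surjectivity and fullness, i.e.\ that every compatible triple of sheaves descends. Here the plan is to invoke Harbater's comparison between the category of coherent sheaves on the proper $A$-scheme $T$ and the ``patching category'' built from $U^*$, the $T_i$, and the $\cak_i$; this is exactly the content of the cited theorem, whose proof rests on (a) Grothendieck's formal existence theorem (formal GAGA) for the proper morphism $T\to \spec A$, which lets one pass between coherent sheaves on $T$ and on the $\frp$-adic formal completion $\widehat T$, and (b) a Beauville--Laszlo / faithfully-flat-descent type glueing along the closed points $\tau_i$, which lets one reconstruct a sheaf on $\widehat T$ from its restriction to the formal completion of $U$ and to the formal neighbourhoods $T_i$, glued over the punctured formal neighbourhoods $\cak_i$. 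Combining these two equivalences gives the asserted equivalence of categories for $\cam$.

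Finally I would address the variants for $\caa\cam$, $\cas\cam$, and $G\cam$. These follow formally once the equivalence for $\cam$ is in hand: an $\cao_T$-algebra structure on a coherent sheaf is an extra datum (multiplication and unit morphisms satisfying associativity and unit identities) expressible entirely in terms of morphisms of coherent sheaves, so it is transported by the equivalence; likewise a $G$-action is a collection of automorphisms satisfying the group relations, again transported by functoriality. The conditions ``generically separable'', ``locally free'', and ``$G$ transitive on geometric generic fibers'' are checked on the total space $T$ and can be verified after the glueing because they are local/generic conditions that are already encoded on $U^*$ (which contains the generic points of $T$). The main obstacle, and the reason one appeals to \cite{ha03} rather than writing it out, is step (b): the glueing of coherent sheaves along the closed points $\tau_i$ over a non-Noetherian-looking completed base requires care with $\frp$-adic completions and flatness, and this is exactly the technical heart of Harbater's patching theorem; we take it as given.
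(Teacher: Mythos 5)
Your proposal matches the paper exactly in approach: the paper states this result with no proof, attributing it directly to Harbater \cite[Theorem 3.2.12]{ha03}, which is precisely what you do, and your accompanying sketch of the underlying mechanism (formal GAGA over the complete local base plus glueing along the punctured formal neighbourhoods, with the algebra and $G$-action variants transported through the equivalence) is a fair account of how Harbater's proof goes. Nothing further is required.
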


Next we state a lemma useful for putting covers into a situation where the theorem above holds.  Its proof is just an application of a generalization of the Noether Normalization Lemma.

\begin{lm}\label{Noether}
Let $C$ be an affine $k$-curve and $X$ be its smooth projective completion.
Then there exists a finite morphism $\Theta_{X}:X\to\PP^1_x$ such that
$\Theta_{X}$ is {\'e}tale at $\Theta_{X}^{-1}(\{x = 0\})$ and such that $\Theta_{X}^{-1}(\{x = \infty\}) = X \setminus C$.
\end{lm}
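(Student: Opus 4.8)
The plan is to produce $\Theta_X$ as a small additive translate $f-c$ of a rational function $f\in k(X)$ that is regular on $C$ and has its poles exactly at the points of $X\setminus C$. Any such non-constant $f$ on the projective smooth curve $X$ automatically defines a finite morphism $f\colon X\to\PP^1_x$, and set-theoretically $f^{-1}(\{x=\infty\})$ is just the pole set of $f$, i.e.\ $X\setminus C$. So the extra content is to arrange that the morphism is \'etale over $x=0$. For that I would impose two more conditions: first, that $f$ be \emph{separable} over $k$, which forces $f$ to be generically \'etale and hence to have only finitely many branch points in $\PP^1$; and second, having secured separability, I would replace $f$ by $f-c$ for a constant $c\in k$ chosen outside that finite branch locus. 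Subtracting a constant leaves the poles unchanged, so $(f-c)^{-1}(\{x=\infty\})=X\setminus C$ still holds, while $(f-c)^{-1}(\{x=0\})=f^{-1}(c)$ is now a fibre of $f$ over an unbranched point, hence unramified; since a finite morphism of smooth curves is automatically flat, flat plus unramified there yields \'etale, and I would set $\Theta_X:=f-c$.

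To build such an $f$ I would use Riemann--Roch. Writing $X\setminus C=\{Q_1,\dots,Q_r\}$ (with $r\ge 1$, as $C$ is affine), I would take a divisor $D$ supported on these points with $\deg D$ large, e.g.\ $D=m(Q_1+\dots+Q_r)$ with $m\gg 0$, so that $\ell(D)=\deg D+1-g_X$ and each $L(D-Q_i)\subsetneq L(D)$ has codimension one. When $p>0$ I would also observe that $L(D)\cap k(X)^p$ consists of the $p$-th powers of elements of $L(\lfloor D/p\rfloor)$, which, $k$ being perfect, is a $k$-subspace of $L(D)$ of dimension $\ell(\lfloor D/p\rfloor)<\ell(D)$, hence proper once $m$ is large. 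Since $k$ is algebraically closed and therefore infinite, the finite-dimensional space $L(D)$ is not the union of the finitely many proper subspaces $L(D-Q_1),\dots,L(D-Q_r)$ and $L(D)\cap k(X)^p$, so I can pick $f$ avoiding all of them. Then $f$ is non-constant with pole set exactly $\{Q_1,\dots,Q_r\}$ (it lies in $L(D)$ but in no $L(D-Q_i)$), and $f\notin k(X)^p$, so $df\ne 0$ in $\Omega_{k(X)/k}$ and $k(X)/k(f)$ is separable. In characteristic $0$ the $p$-th-power condition is simply dropped.

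The step I expect to be the only genuine obstacle — and the only place positive characteristic enters — is the separability requirement. A function with the prescribed pole behaviour need not be separable (it could, for instance, be a $p$-th power), and an inseparable $f$ is ramified at \emph{every} point of $X$, so no additive translate of it can be \'etale over $x=0$; the Riemann--Roch dimension count above is exactly what forces a separable choice. Everything else is standard: finiteness of a non-constant map from a projective curve, the identification of $f^{-1}(\{x=\infty\})$ with the pole set, finiteness of the branch locus of a generically \'etale map of curves, and automatic flatness of a finite morphism between smooth curves. (Alternatively, one could phrase the construction of $f$ as a ``generic'' Noether normalization of the coordinate ring of $C$, choosing the normalizing element so that its reciprocal vanishes only on $X\setminus C$ and so that a general fibre is \'etale; the Riemann--Roch argument is what makes this genericity precise, which is presumably the ``generalization of the Noether Normalization Lemma'' alluded to.)
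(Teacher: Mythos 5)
Your proof is correct, and it reaches the same intermediate goal as the paper --- a finite, generically separable map to the line whose fibre over $\infty$ is $X\setminus C$, followed by an additive translation to move $x=0$ off the finite branch locus --- but by a genuinely different route to that map. The paper simply invokes a strengthened Noether Normalization Lemma (Eisenbud, Corollary 16.18) applied to the coordinate ring of $C$ to produce a finite proper generically separable morphism $C\to\Aff^1_x$, and then extends it to the completions; the separability needed to make the branch locus finite is absorbed into the citation. You instead work on the complete curve $X$ and build the function by hand via Riemann--Roch, choosing $f\in L(m\sum Q_i)$ outside the subspaces $L(D-Q_i)$ and outside $L(D)\cap k(X)^p$; the last avoidance is exactly the point where the cited Noether normalization does hidden work in characteristic $p$, and your dimension count $\ell(\lfloor D/p\rfloor)<\ell(D)$ makes it explicit. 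The trade-off is that your argument is self-contained and makes the characteristic-$p$ subtlety visible, at the cost of being longer; the paper's is shorter but leans on a black box. One cosmetic remark: your construction yields $(f-c)^{-1}(\{x=0\})$ \emph{unramified} and then upgrades to \'etale via flatness of a finite morphism of smooth curves, which is fine and matches what the paper implicitly does when it says ``\'etale away from finitely many points.''
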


\begin{proof}
By a stronger version of Noether
normalization (\emph{cf.} \cite[Corollary 16.18]{Eis}), there exist finite proper generically separable
$k$-morphism from $C \to \Aff^1_x$  where $\Aff^1_x$ is the affine $k$-lines with local coordinate $x$. 
The branch locus of this morphism is of codimension $1$, and hence it is {\'e}tale away from finitely many points. 
By translation we may assume that $x=0$ is not a branch point of $C \to \Aff^1_x$.
This morphism extends to a finite proper morphism $\Theta_{X}: X \to \PP^1_x$. 
Note that $\Theta_{X}$ is {\'e}tale at $\Theta_{X}^{-1}(\{x = 0\})$ and that $\Theta_{X}^{-1}(\{x = \infty\}) = X \setminus{C}$.
\end{proof}

The patching results will be applied to covers of reducible curves where there are sufficiently many components of the base over which the cover is trivial.  First, we need some terminology.  
\begin{df}
Let $\Phi:V\to X$ be a $G$-cover of smooth irreducible projective curves over $k$.
Assume $X$ has $r$ marked points and $\Phi$ is \'etale away from these $r$ points.
We say that a $G$-cover of connected projective curves $\Phi':V'\to X'$ with $r$ marked
points on $X'$ is a \emph{deformation} of $V\to X$ if there exist a smooth irreducible 
$k$-scheme $S$, a cover of $S$-curves $\Phi_S:V_S \to X_S$, $r$ sections 
$p_1,\ldots, p_r:S \to X_S$ and closed points $s$ and $s'$ in $S$ such that the following three conditions hold.
\begin{enumerate}
 \item $\Phi_S$ induces $\Phi$ and $\Phi'$ at $s$ and $s'$ respectively. 
 \item $\{p_1(s),\ldots, p_r(s)\}$ and $\{p_1(s'),\ldots, p_r(s')\}$ are the marked 
  points of $X$ and $X'$ respectively.
 \item $\Phi_S$ is a $G$-cover \'etale away from the sections $p_1,\ldots, p_r$.
\end{enumerate}
\end{df}

\begin{notation} Given a $\Phi:V\to X$ and a deformation $\Phi':V'\to X'$, we will call $(S; \Phi_S:V_S \to X_S; p_1, \ldots, p_r; s; s')$ the associated data of the deformation.
\end{notation}

\begin{df}
The deformation $V'\to X'$ will be called a \emph{SNC deformation} if all irreducible 
components of $X'$ are smooth and they intersect transversely. Moreover, it will be 
called a \emph{degeneration} if it is SNC and for some irreducible component $X_1$ of $X'$, the restriction of 
the $G$-cover to $X_1$ is induced from a trivial cover, 
i.e. $V'\times_{X'} X_1 \cong \Ind_{e}^G X_1$ as $G$-covers of $X_1$. In this situation $X_1$ will be 
called a trivial component of $\Phi'$.
\end{df}

\begin{rmk}\label{curve} Let $\Phi: V_X \to X$ be a $G$-cover.
Suppose $\Phi$ has a deformation $\Phi':V_X'\to X'$ with associated data $(S; \Phi_S:V_S \to X_S; p_1, \ldots, p_r; s; s')$.
By taking a smooth irreducible $k$-curve in $S$ passing through $s$ and $s'$ and 
pulling back $\Phi_S:V_S\to X_S$ to this curve, we may assume $S$ is a smooth $k$-curve. In other words, if $\Phi'$ is a deformation of $\Phi$ then we may assume $\Phi$ can be deformed to $\Phi'$ along a smooth curve.
\end{rmk}

\begin{ex}
Let $\Phi:V_X\to X$ be a $G$-cover of irreducible smooth projective curves.
Let $\tau$ be a closed point in $X$. Let $S=\Aff^1_t$ and $X_S$ be the the blowup of 
$(\tau,t=0)$ in $X\times S$ and $X'$ be the total transform of the zero locus of
$t=0$ in $X\times S$. Note that $X'$ has two irreducible components, a copy 
of $X$ and the exceptional divisor isomorphic to $\PP^1$ intersecting at $\tau$. 
One can obtain a $G$-cover $\Phi_S:V_{X_S}\to X_S$ obtained by pullback along 
$X_S \to X\times S\to X$. The fiber over $t=0$ induces a $G$-cover 
$\Phi':V_{X'}\to X'$. The exceptional divisor $\PP^1$ is the trivial component of
$\Phi'$. 
\end{ex}

\begin{lm}\label{smoothfibers}
 Let $\Phi:V\to S$ be a flat family of reduced projective irreducible curves in which $S$ is a smooth connected variety and $V$ is a normal variety. Suppose for every point $s\in S$ the normalization of the fiber $V_s$ has the same genus. Then for every closed point $s\in S$ the fiber $V_s$ is smooth. 
\end{lm}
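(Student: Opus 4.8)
The plan is to argue by semicontinuity of fiber dimension and the arithmetic genus, combined with the hypothesis on the geometric genus. First I would recall that since $\Phi:V\to S$ is flat and projective with one-dimensional fibers, and $S$ is connected, the arithmetic genus $p_a(V_s) = \dim_k H^1(V_s,\cao_{V_s})$ is constant in $s$; this is the standard flatness/constancy-of-Hilbert-polynomial statement. On the other hand, for each $s$ the normalization $\nu_s:\widetilde{V_s}\to V_s$ fits in an exact sequence $0\to\cao_{V_s}\to\nu_{s*}\cao_{\widetilde{V_s}}\to\cas_s\to 0$, where $\cas_s$ is a skyscraper sheaf supported at the non-smooth points of $V_s$, with $\dim_k H^0(V_s,\cas_s) = \delta_s\ge 0$, and $\delta_s=0$ if and only if $V_s$ is smooth. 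Taking cohomology and using that $V_s$ is irreducible (so $H^0(\cao_{V_s})=k=H^0(\nu_{s*}\cao_{\widetilde{V_s}})$) gives $p_a(V_s) = g(\widetilde{V_s}) + \delta_s$, where $g(\widetilde{V_s})$ is the genus of the normalization of $V_s$.

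Next I would invoke the hypothesis: $g(\widetilde{V_s})$ is independent of $s$, say equal to $g$. Combined with $p_a(V_s)$ being constant, say equal to $p_a$, the displayed identity forces $\delta_s = p_a - g$ to be constant. So it suffices to show $\delta_s=0$ for at least one $s$, equivalently that the generic fiber is smooth; then $\delta_s\equiv 0$ and every fiber is smooth. For the generic fiber, I would use that $V$ is a normal variety and $S$ is smooth: the singular locus of the morphism $\Phi$ (the locus where $\Phi$ is not smooth) is a closed subset $Z\subset V$, and generic smoothness over the function field — more precisely, the fact that a dominant morphism from a normal (hence regular in codimension one) variety to a smooth variety is smooth on a dense open, together with the fact that $V$ being normal and the fibers being reduced curves forces the non-smooth locus of each fiber to be finite — shows that $Z$ does not dominate $S$. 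Hence $Z$ lies over a proper closed subset of $S$, and the fiber over a point of $S$ outside the image of $Z$ is smooth. This produces an $s_0$ with $\delta_{s_0}=0$, and by the constancy just established $\delta_s\equiv 0$.

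The main obstacle I anticipate is making the relationship between $p_a$ and the normalization genus completely rigorous in the relative, possibly non-geometrically-reduced or wildly ramified setting of positive characteristic: one must be careful that the fibers are reduced (given by hypothesis), that irreducibility of the fibers is preserved (also given), and that the normalization sequence behaves well fiberwise — in general normalization does not commute with base change. The clean way around this is to avoid a relative normalization entirely and argue pointwise as above, using only the two facts that $p_a(V_s)$ is constant (flatness) and that $V$ normal plus $S$ smooth forces smoothness of the generic fiber, and then upgrade via the constancy of $\delta_s$. A secondary point to check is that $V_s$ irreducible and reduced indeed gives $H^0(\cao_{V_s}) = k$, which uses that $k$ is algebraically closed so that $V_s$ is geometrically connected and geometrically reduced; this is where the standing hypothesis that $k$ is algebraically closed enters.
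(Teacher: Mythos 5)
Your argument follows the same route as the paper's: constancy of the arithmetic genus $p_a(V_s)$ from flatness, the identity $p_a(V_s)=g(\widetilde{V_s})+\delta_s$ with $\delta_s=0$ exactly when $V_s$ is smooth, and the existence of at least one smooth fiber deduced from normality of $V$. The paper merely packages the last two steps as a direct contradiction (a singular fiber would have geometric genus strictly less than the common arithmetic genus $g$) rather than through the constancy of $\delta_s$; that difference is cosmetic.

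The substantive issue is the step where you produce a smooth fiber. You justify it by ``a dominant morphism from a normal variety to a smooth variety is smooth on a dense open.'' That is generic smoothness, a characteristic-zero theorem, and it fails over the characteristic-$p$ field $k$ in which this paper works: for $p$ odd the family $y^2=x^p-t$ over $\Aff^1_t$ has regular total space, flat reduced irreducible fibers, constant normalization genus, and no smooth geometric fiber, and quasi-elliptic fibrations give smooth \emph{projective} examples of the same phenomenon. The paper's own version of this step (note that $V_\eta$ is normal because it is a localization of the normal variety $V$, then spread normality out to the nearby closed fibers) is also delicate, since the spreading-out requires \emph{geometric} normality of $V_\eta$ over $k(S)$, which does not follow from normality in characteristic $p$; but its first assertion is at least true, whereas the statement you cite is simply unavailable here. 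So this is the step you would need to repair: either establish geometric normality (equivalently, smoothness) of the generic fiber from some additional feature of the situation, or exhibit a single smooth closed fiber directly. The remainder of your argument --- constancy of $p_a$, the exact sequence $0\to\cO_{V_s}\to\nu_{s*}\cO_{\widetilde{V_s}}\to\cS_s\to 0$ computed fiber by fiber so as to avoid any relative normalization, and the use of $H^0(\cO_{V_s})=k$ for the reduced irreducible fibers over the algebraically closed $k$ --- is fine and matches the paper.
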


\begin{proof}
 Let $\eta$ be the generic point of $S$. Note that $V_{\eta}$ being a localization of $V$ is normal. Hence there exist a nonempty open subset $U$ of $S$ such that for all closed point $s\in U$, $V_s$ is a normal $k$-curve and hence smooth. Let $g$ be the genus of such a curve. Suppose there exist $s'\in S$ a closed point such that $V_{s'}$ is singular. Then the arithmetic genus of $V_{s'}$ is $g$, since $\Phi$ is a flat family. But $V_{s'}$ is singular so the geometric $p_g(V_{s'})<g$. But $p_g(V_{s'})$ is same as the genus of the normalization of $V_{s'}$. This contradicts the hypothesis that the normalization of every fiber has the same genus.  
\end{proof}

\begin{pro}\label{patchKumar}
Let $\Gamma$ be a finite group. Let $G$ be a subgroup of $\Gamma$ and let $H_1,\ldots, H_m$ be subgroups of $\Gamma$ of order prime-to-$p$. Assume that $G, H_1,\ldots, H_m$ generate $\Gamma$.
Let $C$ be an affine $k$ curve with smooth completion $X$. Let $\Phi: V_X\to X$ be a $G$-Galois cover of $X$ \'etale over $C$.
Let $B_X = X-C$, $r=\#(B_X)$, and consider the set $B_X$ to be $r$ marked points on $X$.
Suppose $\Phi$ has a degeneration $\Phi':V_X'\to X'$ with an associated data $(S; \Phi_S:V_S \to X_S; p_1, \ldots, p_r; s; s')$ such that $S$ is a smooth curve and $X_1,\ldots, X_m$ are the trivial components of $X'$. Let $B$ be the union of the images of the sections $p_1,\ldots, p_r:S \to X_S$ and let $r_i$ be the number of smooth marked points of $X'$ lying on $X_i$   for $i=1,\ldots,m$. Further assume that for each $1\le i \le m$, there exist an $H_i$-cover $\Phi_{X_i}:W_{X_i}\to X_i$ \'etale away from these $r_i$ points. Let $T=X_S\times_S \hat S_{s'}$ where $S_{s'}$ is the completion of $S$ at $s'$ and $\Phi_T:V_T\to T$ be the pullback of $\Phi_S$ to $T$. 
Then there exists a $\Gamma$-cover $\Psi:W\to T$ such that $\Psi$ is \'etale away from $B\times_{X_S} T$. Moreover, if $\Gamma=H\rtimes G$ and $H_1,\ldots, H_m \le H$ then $\Psi$ dominates the $G$-cover $\Phi_T$.
\end{pro}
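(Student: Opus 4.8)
The plan is to build $\Psi$ by a formal patching argument over the complete local ring $A=\hat{\cao}_{S,s'}$, which is a power series ring $k[[t]]$ since $S$ is a smooth curve (Remark \ref{curve}). Thus $T=X_S\times_S\hat S_{s'}$ is a proper flat $A$-scheme whose closed fiber is the degenerate curve $X'$; write $X_0'$ for the union of the non-trivial components of $X'$, so that the restriction of $\Phi_T$ to the formal completion of $T$ along $X_0'$ is a $G$-Galois cover with connected total space (being a deformation of the connected cover $\Phi$), while over each trivial component $X_i$ the cover $\Phi'$ is induced from a trivial cover, and hence $\Phi_T$ is \'etale, and trivial, on a neighborhood of $X_i$. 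The idea is to cut $T$ along the nodes of $X'$ meeting the trivial components, keep $\Ind_G^\Gamma\Phi_T$ over the piece concentrated on $X_0'$, install $\Ind_{H_i}^\Gamma$ of a lift of $\Phi_{X_i}$ over the piece concentrated on $X_i$, glue back by Theorem \ref{genpatch}, and read off connectedness and domination from the group theory.

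In detail, let $\{\tau_1,\dots,\tau_N\}$ be the nodes of $X'$ lying on some trivial component, set $U=T\setminus\{\tau_1,\dots,\tau_N\}$ and $T_j=\spec(\hat{\cao}_{T,\tau_j})$. Removing the $\tau_j$ splits the closed fiber of $U$ into the affine curves $X_i^\circ:=X_i\setminus\{\text{nodes}\}$ (for $i=1,\dots,m$) and $X_0'$ minus its attaching points, so the $\frp$-adic completion $U^*$ of $U$ decomposes correspondingly as $U_0^*\sqcup U_1^*\sqcup\cdots\sqcup U_m^*$. On $U_0^*$ install $\Ind_G^\Gamma$ of the restriction of $\Phi_T$. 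On $U_i^*$: since $|H_i|$ is prime to $p$ the cover $\Phi_{X_i}$ is tame, and the \'etale $H_i$-cover it induces over $X_i^\circ$ minus the $r_i$ smooth marked points lifts uniquely — \'etale covers being insensitive to the nilpotent thickenings of the reduction — to an \'etale $H_i$-cover of $U_i^*$ minus the $r_i$ marked sections; normalizing $U_i^*$ in it yields an $H_i$-cover $\widetilde W_{X_i}\to U_i^*$ that is \'etale away from the marked sections and reduces to $\Phi_{X_i}|_{X_i^\circ}$, and we install $\Ind_{H_i}^\Gamma\widetilde W_{X_i}$. On each $T_j$ install the trivial $\Gamma$-cover. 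The crucial local point is that no marked section passes through a node of $X'$ — on such a section the pullback of the parameter $t$ would vanish to order at least $2$ — so $\Phi_T$ is \'etale near every $\tau_j$; hence each of the covers just described restricts over the overlap $\cak_j$ to a trivial $\Gamma$-cover (inducing a fiber-transitive \'etale cover up to $\Gamma$ again yields a fiber-transitive \'etale cover, here the trivial one), and gluing isomorphisms over $\bigcup_j\cak_j$ exist. Applying Theorem \ref{genpatch} in the category $\Gamma\cam$ produces a $\Gamma$-cover $\Psi:W\to T$, which is \'etale away from $B\times_{X_S}T$ because every local piece is.

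It remains to check that $W$ is connected — so that $\Psi$ is genuinely a $\Gamma$-cover, its degree being $|\Gamma|$ — and, when $\Gamma=H\rtimes G$ with $H_1,\dots,H_m\le H$, that $\Psi$ dominates $\Phi_T$; both depend on the gluing elements $\delta_j\in\Gamma$ chosen over the $\cak_j$. Computing $\pi_0$ of the closed fiber $W_0$ of $W$ in the standard way: the cover $\Ind_G^\Gamma(\Phi'|_{X_0'})$ over $X_0'$ has component group $G$, the cover $\Ind_{H_i}^\Gamma(\Phi_{X_i}|_{X_i^\circ})$ over each $X_i$ has component group $H_i$, the dual graph of $X'$ is connected (the generic fiber of $T$ is geometrically connected), and the $\delta_j$ enter by conjugation; choosing every $\delta_j$ inside $G\subseteq\Gamma$ gives $\pi_0(W_0)=\Gamma/\langle G,H_1,\dots,H_m\rangle$, which is a single point by hypothesis, so $W_0$, and hence $W$, is connected. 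For domination, $H\normal\Gamma$ and $\Gamma/H\cong G$, and by functoriality of the patching equivalence $W/H$ is the $G$-cover patched from the canonical identifications $(\Ind_G^\Gamma\Phi_T)/H\cong\Phi_T$ over $U_0^*$, $(\Ind_{H_i}^\Gamma\widetilde W_{X_i})/H\cong(\text{the trivial }G\text{-cover of }U_i^*)$ over $U_i^*$ (here $H_i\le H$ is used), and trivial $G$-covers over the $T_j$, the gluing elements being the images of the $\delta_j$ in $G$. Since the restriction of $\Phi_T$ to $U_i^*$ is likewise trivial ($X_i$ is a trivial component) and $\Phi_T$ is \'etale near the $\tau_j$, one may take each $\delta_j$ in $G$ mapping to the gluing element of $\Phi_T$ over $\cak_j$; this is consistent with the connectedness choice above, and it makes the patched identifications agree over the overlaps, yielding an isomorphism of the $G$-cover $W/H$ with $\Phi_T$, i.e.\ $\Psi$ dominates $\Phi_T$.

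The main obstacle is the bookkeeping at the nodes and with the gluing data: one must verify that the nodes carry no ramification and that, after inducing up to $\Gamma$, every local cover — the handle cover, the $H_i$-covers, and the trivial covers on the $T_j$ — degenerates over the overlaps $\cak_j$ to the trivial $\Gamma$-cover, so that gluing data exists at all; and one must check that the single family $\{\delta_j\}$ of gluing parameters can be chosen to meet the connectedness constraint "$\langle G,H_1,\dots,H_m\rangle=\Gamma$" and, in the split case, the domination constraint simultaneously — both being arranged by taking the $\delta_j$ in $G$. By contrast, the lifting of the tame covers $\Phi_{X_i}$ and the compatibility of normalization with reduction are routine, precisely because the orders of the $H_i$ are prime to $p$.
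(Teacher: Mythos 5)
Your proposal is correct and follows essentially the same route as the paper's proof: both excise the nodes meeting trivial components, install $\Ind_G^\Gamma$ of the deformed cover over the non-trivial part, $\Ind_{H_i}^\Gamma$ of (a lift of) $\Phi_{X_i}$ over each trivial component, and trivial $\Gamma$-covers on the formal neighborhoods of the nodes, then invoke Theorem \ref{genpatch}, with connectedness from $\langle G,H_1,\ldots,H_m\rangle=\Gamma$ and domination by quotienting by $H$ and using the uniqueness in the patching equivalence. The only cosmetic differences are that the paper obtains the $H_i$-covers by pulling back along the product structure $\tilde T_i^o\to X_i$ rather than by a tame-lifting argument, and it leaves the gluing data implicit where you track the elements $\delta_j$ explicitly.
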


\begin{proof}
By Remark \ref{curve} we may assume that there exist an associated data $(S; \Phi_S:V_S \to X_S; p_1, \ldots, p_r; s; s')$ for the degeneration of $\Phi$ to $\Phi'$ such that $S$ is a smooth curve. Let $t$ be the local coordinate of $S$ at $s'$.
Then  $\hat S_{s'}=\spec(k[[t]])$. 
By construction, the closed fiber of $T$ is $X'$. Let $X_0$ be the closure of 
$X'\setminus \cup_{i=1}^m X_i$ in $X'$. So $X_0$ is made up of the 
nontrivial components of $\Phi'$.
Since $\Phi'$ is a degeneration of $\Phi$ it is a SNC deformation, so all irreducible components of $X'$ are smooth and they intersect transversely. 
Let $\tau_1,\ldots, \tau_N$ be the closed points of $T$ where $X_i$ and $X_j$ 
intersect for some $0\le i \ne j \le m$. 
Let $X_i ^o= X_i\setminus \left(\{\tau_1,\ldots,\tau_N\} \cap X_i\right)$ for $0\le i \le m$. 
Note that $\tau_1,\ldots, \tau_N$ will be used to denote the points
of $T$, $X'$ and various components of $X'$ as well, 
but this should not lead to any confusion.

Let $T^o=T\setminus \{\tau_1,\ldots,\tau_N\}$ and $\tilde T^o$ be the formal scheme obtained
by the completion of $T^o$ along the closed fiber $(t=0)$ (i.e. the $(t)$-adic completion).
Let $T_i ^o = X_i ^o\times_k \spec(k[[t]])$ and $\tilde T_i^o$ 
be the $(t)$-adic completion of $T_i ^o$ (i.e. along $X_i^o$) for $i=0,\ldots, m$.
Since the closed fiber of $T^o$ is the disjoint union of $X_0 ^o, X_1 ^o, \ldots, X_m^o$,
$$\tilde T^o = \tilde T_0^o \cup \tilde T_1^o \cup\ldots \cup T_m^o$$

By base change of $\Phi_T$ to $\tilde T^o$ we obtain a $G$-cover 
$\Phi_{\tilde T^o}:\tilde V_{T^o}\to \tilde T^o$ and hence a $G$-cover of the component $\tilde T_0^o$ which we will denote by
$\Phi_{T_0}:\tilde V_0\to \tilde T_0^o$. Note that $\Phi_T$ restricted to the closed 
fiber is the $G$-cover $\Phi':V_X'\to X'$. Since $X_i$, for $i=1,\ldots, m$,  
are the trivial components of 
the cover $\Phi':V_X'\to X'$, the $G$-covers of the components $\tilde T_i^o$ are induced from
the trivial cover. Let $\Phi_{T_i}:\tilde V_i \to \tilde T_i^o$ be the $H_i$-cover obtained
by pulling back $\Phi_{X_i}:W_{X_i}\to X_i$ along the composition of morphisms
$\tilde T_i^o \to T_i ^o\to X_i ^o \to X_i$, for $i=1,\ldots, m$.
Let $\Phi^o:\Ind^{\Gamma}_G \tilde V_0 \cup \Ind^{\Gamma}_{H_1} \tilde V_1 \cup
\ldots \cup \Ind^{\Gamma}_{H_m} \tilde V_m \to \tilde T^o$
be the $\Gamma$-cover of $\tilde T^o$ obtained from $\Phi_{T_i}$ for $i=0,\ldots, m$.

Let $\hat T_{\tau_j}=\spec(\hat\cO_{T,\tau_j})$ be the formal neighbourhood of $\tau_j$ in $T$
and $\hat \cK_j$ be the $(t)$-adic completion of $\hat T_{\tau_j}\setminus \{\tau_j\}$ for 
$1\le j \le N$, i.e. $\hat \cK_j$ is the $(t)$-adic completion of the punctured 
formal neighbourhood of $\tau_j$ in $T$. We have a natural morphism $\hat \cK_j \to \tilde T^o$.
Note that $\Phi':V_X'\to X'$ is \'etale at $\tau_1,\ldots, \tau_N \in X'$, since 
$\tau_1,\ldots,\tau_N$ lie in $X_i$ for some $1\le i\le m$ and over 
$X_i$, $\Phi'$ is induced from a trivial cover. 
Since $\Phi'$ is the restriction of $\Phi_T$ to the closed fiber, 
$\Phi_T$ is \'etale over the points $\tau_1,\ldots,\tau_N$ in $T$. Also note that
$\Phi_{T_i}$ is the pull back of the $H$-cover $\Phi_{X_i}:W_{X_i}\to X_i$ which is 
\'etale over $\tau_1,\ldots,\tau_N$. Hence the pull back of $\Phi^o$ to $\hat \cK_j$ is the
$\Gamma$-cover of $\hat \cK_j$ induced from the trivial cover for $1\le j \le N$. 

Apply Theorem \ref{genpatch} to obtain a $\Gamma$-cover $\Psi:W\to T$ which induces the $\Gamma$-cover 
$\Phi^o$ of $\tilde T^o$ and trivial $\Gamma$-cover over $\cup_{j=1}^N\hat T_{\tau_j}$.
The cover $W$ is connected because $\Gamma$ is generated by 
$G, H_1,\ldots, H_m$ and $\Gamma$ acts transitively on 
$\Ind^{\Gamma}_G \tilde V_0 \cup \Ind^{\Gamma}_{H_1} \tilde V_1 \cup
\ldots \cup \Ind^{\Gamma}_{H_m} \tilde V_m$.

Recall that $B$ is the union of the images of the sections $p_1,\ldots, p_r:S \to X_S$. 
The branch locus of $\Psi$ is clearly contained in $T^o$ because in the formal neighbourhood 
of $\tau_i$'s, $\Psi$ restricts to the $\Gamma$-cover induced from the trivial cover. 
Since the pullback of $\Psi$ to $\tilde T^o$ is $\Phi^o$, the branch locus of 
$\Phi^o$ maps to the branch locus of $\Psi$ under the morphism $\tilde T^o \to T^o$. 
Note that for $i=1, \ldots, m$, $\Phi_{T_i}$ is \'etale away from 
$B_i\times_{X_i^o} \tilde T_i^o$ in $\tilde T_i^o$ where $B_i$ is the set of 
$r_i$ smooth marked points of $X'$ lying on $X_i$. Moreover, the image of 
$B_i\times_{X_i^o} \tilde T_i^o$ under the morphism $\tilde T_i^o\to T$ is 
contained in $B\times_{X_S} T$.

Note that $\Phi_{T_0}:\tilde V_0 \to \tilde T_0$ is the pullback of 
the $G$-cover $\Phi_S:V_S\to X_S$ under the morphism 
$\tilde T_0\to \tilde T^o\to T^o \to T\to X_S$.
Also the branch locus of $\Phi_S$ is contained in $B$. 
So combining all these we see that $\Psi$ is \'etale away from $B\times_{X_S} T$.

Finally if $\Gamma=H\rtimes G$ then $H$ is a normal subgroup of $\Gamma$ and by quotienting one obtains a $G$-cover $W/H\to T$. Since $H_1,\ldots H_m \le H$ the pullback of $W/H$ on $\tilde T^o$ is $\tilde V_{T^o}$. Also pullback over $\hat T_{\tau_j}$ of $W/H$ is a trivial cover for $1\le j \le N$. Hence by Theorem \ref{genpatch} the $G$-covers $W/H\to T$ and $V_T\to T$ are isomorphic. So $\Psi$ dominates $\Phi_T$.  
\end{proof}

\section{Lefschetz-Abhyankar Result} \label{LATrick}

Now we can use a Lefschetz type principle and Abhyankar's lemma to obtain a 
$\Gamma$-cover of $X$ \'etale over $C$ and dominating the given $G$-cover.  This is a deformation argument similar to \cite[Proposition 4]{HS}. 

\begin{pro}\label{ALtrick}
In the context of Proposition \ref{patchKumar}, there exist a $\Gamma$-cover $W_s\to X$ dominating the $G$-cover $\Phi:V_X\to X$ which is \'etale over $C$.
\end{pro}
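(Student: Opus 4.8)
The goal is to pass from the $\Gamma$-cover $\Psi: W \to T$ over the formal disk $\hat S_{s'} = \spec k[[t]]$ (with $T = X_S \times_S \hat S_{s'}$) to a genuine $\Gamma$-cover of the smooth projective curve $X$ over $k$ that dominates $\Phi: V_X \to X$ and is \'etale over $C$. The strategy is standard Lefschetz–Abhyankar, as in \cite[Proposition 4]{HS}: first spread $\Psi$ out to a $\Gamma$-cover over a curve in $S$, then pick a $k$-point of that curve to specialize at, and finally use Abhyankar's lemma to absorb the extra branching introduced by the specialization into the marked points.

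First I would use Grothendieck's existence theorem (formal GAGA) and a standard algebraization/approximation argument to descend $\Psi: W \to T$ from the formal scheme to an \'etale neighborhood of $s'$ in $S$: there is an affine \'etale (or Zariski) neighborhood $S^\circ$ of $s'$ and a $\Gamma$-cover $\Psi_{S^\circ}: W_{S^\circ} \to X_{S^\circ} := X_S \times_S S^\circ$ whose formal completion at $s'$ recovers $\Psi$, and which remains a $\Gamma$-cover dominating the pullback of $\Phi_S$ (this last fact uses the ``moreover'' clause of Proposition \ref{patchKumar}, so I would assume here we are in the split case $\Gamma = H \rtimes G$, which is the relevant one for the applications, or else argue that domination is automatic from the construction). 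Shrinking $S^\circ$ further I can assume $\Psi_{S^\circ}$ is \'etale away from the sections $p_1, \dots, p_r$ restricted to $S^\circ$, since being unramified off $B \times_{X_S} T$ is an open condition and holds at the special fiber.

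Next, by Remark \ref{curve} I may assume $S$ (hence $S^\circ$) is a smooth curve, and since $k$ is algebraically closed, $S^\circ$ has a $k$-point $s'' \ne s'$; more usefully, the cover $\Psi_{S^\circ} \to X_{S^\circ} \to S^\circ$ together with the sections is a family over a smooth $k$-curve whose fiber over $s'$ is the cover $\Psi$ of $X' = $ (special fiber). Here is where the Lefschetz principle enters: rather than specializing at a generic-ish point, I would pull back along a $k$-point $s_0$ of $S^\circ$ at which the fiber $X_{S^\circ, s_0}$ is isomorphic to $X$ (such a point exists because $X_S \to S$ was set up with generic fiber $X$ — indeed in the construction of Proposition \ref{patchKumar} the fiber of $X_S$ over the other marked point $s$ was $X$ itself, and a general fiber is a smooth curve; one should check that a general fiber is $\cong X$, e.g. because $X_S \to S$ is obtained from $X \times S$ by blowing up in the special fiber only, or invoke Lemma \ref{smoothfibers} to see the general fiber is smooth of the right genus and then argue it is $X$). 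Restricting $\Psi_{S^\circ}$ to this fiber gives a $\Gamma$-cover $W_{s_0} \to X$ dominating the $G$-cover $\Phi_{s_0}: V_{X, s_0} \to X$, which is $\Phi: V_X \to X$ up to isomorphism, and this cover is \'etale away from the $r$ points $p_1(s_0), \dots, p_r(s_0)$, which are the marked points $B_X = X \setminus C$.

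The main obstacle is the last point: ensuring that the specialized cover $W_{s_0} \to X$ really is \'etale over \emph{all} of $C$, not merely over $C$ minus some sporadic extra branch points that might appear away from the sections during specialization, and that it genuinely dominates $\Phi$ rather than a cover abstractly isomorphic to it. The branch-locus control from Proposition \ref{patchKumar} is stated over $T$ (equivalently over $\hat S_{s'}$), so I need the branch divisor of $\Psi_{S^\circ}$ over $S^\circ$ to be contained in $\bigcup_i p_i(S^\circ)$; I would secure this by shrinking $S^\circ$ around $s'$ (closedness of the branch locus plus the known containment at $s'$) — this is fine because I only need \emph{one} good $k$-point $s_0$, and a nonempty Zariski-open subset of a $k$-curve contains infinitely many such points, at least one of which has fiber $\cong X$ after a further (nonempty open) restriction. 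If one is worried that the two open conditions (``branch locus in the sections'' near $s'$, and ``fiber $\cong X$'') might have disjoint loci, the resolution is to note the $\cong X$ locus is a nonempty open over which we then still have the sections, and the branch-locus condition can instead be checked fiberwise by semicontinuity of the branch divisor's degree: the degree is locally constant on the good locus and equals what it is over $s'$, forcing the branch divisor to stay supported on the sections. Finally, Abhyankar's lemma (tame base change along the marked points) would only be invoked if the specialization forced tame ramification to appear or disappear — in the setup here the sections carry the marked points throughout, so the cleaner statement is simply that $W_{s_0} \to X$ is \'etale over $C = X \setminus B_X$ and dominates $\Phi$, and one takes $W_s := W_{s_0}$.
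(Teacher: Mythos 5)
There is a genuine gap, and it sits exactly where you decided Abhyankar's lemma was not needed. Your plan is to algebraize $\Psi$ over a Zariski/\'etale neighbourhood $S^\circ$ of $s'$ and then evaluate at a $k$-point $s_0\in S^\circ$ whose fiber is isomorphic to $X$. Two things go wrong. First, the descent from $k[[t]]$ does not produce a neighbourhood of $s'$ in $S$: spreading out gives a finitely generated $\cO_S$-algebra $R\subset k[[t]]$ and a scheme $S_0=\spec(R)$ that merely \emph{dominates} $S$; after shrinking $S_0$ to make the fibers of $W_{S_0}\to T_{S_0}$ smooth, irreducible and \'etale off the sections, there is no reason for the point $s$ (where the fiber of $X_S$ is $X$) to lie in the image of $S_0$. (Artin approximation over the henselization would only give a cover agreeing with $\Psi$ to finite order, and you would still face the second problem.) Second, and more seriously, the fibers of $X_S\to S$ away from $s$ are in general \emph{not} isomorphic to $X$: in the actual applications (Proposition \ref{main.result-affine.line}, Proposition \ref{degeneration.highergenus}) the family is a genuinely varying family of Artin--Schreier covers, constant neither near $s'$ nor generically, so the locus ``fiber $\cong X$'' that your argument needs may well be the single point $s$. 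The blow-up example in Section \ref{Formal Patching}, where the family is constant off $t=0$, is not representative of the situation in which the proposition is used.

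The paper's proof confronts this head on: it compactifies $S_0$ to $\bar S_0\to S$, which is projective and dominant, hence surjective, picks $\tau_S\in\bar S_0$ over $s$, and runs a curve $\bar A_0$ through $\tau_S$ meeting $S_0$. The price is that $\tau_S$ may lie in $\bar S_0\setminus S_0$, where the $H$-cover $W_{\bar A}\to V_{\bar A}$ can acquire vertical ramification along the fibers $B_V$ over the boundary points $B_1=\bar A\setminus A$. Because the kernel is prime-to-$p$ this ramification is tame, so a cyclic cover $A'\to\bar A$ totally ramified at $B_1$ with index the lcm of the ramification indices, together with Abhyankar's lemma and purity of the branch locus, removes it; Zariski connectedness then keeps the special fiber connected, and the fiber over $\tau_{A'}$ is a $\Gamma$-cover of $T_s=X$ dominating $V_X\to X$ and \'etale over $C$. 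This tame base change on the parameter curve is the essential step your proposal omits, and without it (or without the surjectivity supplied by compactifying) the specialization at $s$ is not available at all. Your remarks about the split case $\Gamma=H\rtimes G$ and about domination being preserved are fine and match the paper's implicit assumptions.
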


\begin{proof}
 From the conclusion of Proposition \ref{patchKumar}, there is a $\Gamma$-cover $\Psi:W\to T$ of $\hat S_{s'}$-curves dominating $\Phi_T:V_T\to T$ where $V_T=V_S\times_S \hat S_{s'}$. As in the proof of Proposition \ref{patchKumar}, let $t$ be local coordinate of $S$ at $s'$ so that $\hat S_{s'}=\spec(k[[t]])$. By hypothesis $T=X_S\times_S \hat S_{s'}$, $V_S\to X_S$ is a $G$-cover and its fiber at $s\in S$ is the $G$-cover $\Phi:V_X\to X$. Also $\Psi$ is \'etale away from $B\times_{X_S} T$. Since $\Psi$ is a finite morphism, there exist a finitely generated $\cO_S$-algebra $R\subset k[[t]]$ such that the morphisms $W\to V_T\to T$ descend to the morphisms of $S_0=\spec(R)$-curves $W_{S_0}\to V_{S_0} \to T_{S_0}$. Note that the morphism $\hat S_{s'} \to S$ is the composition of the structure morphism $\pi:S_0\to S$ and the morphism $\hat S_{s'}\to S_0$ induced by the inclusion $R\subset k[[t]]$. So $T_{S_0}=X_S\times_S S_0$ and $V_{S_0}=V_S\times_S S_0$, since $T$ and $V_T$ were base change of $X_S$ and $V_S$ respectively to $\hat S_{s'}$. By shrinking $S_0$ we may assume that $S_0$ is smooth and for every point $\tau \in S_0$ the fiber of the cover $W_{S_0}\to T_{S_0}$ is a smooth irreducible $\Gamma$-cover $W_{\tau}\to X_{\pi(\tau)}$ which is \'etale away from $\{p_1(\pi(\tau)),\ldots,p_r(\pi(\tau))\}$ and dominates the $G$-cover $V_{\tau}\to X_{\pi(\tau)}$.

 Since $S_0$ is an affine $S$-scheme, we choose an embedding $S_0\into \Aff^n_S$, define $\bar S_0$ to be the closure of $S_0$ in $\PP^n_S$ and let $\bar \pi: \bar S_0\to S$ be the structure morphism. Since $\bar \pi$ is dominating and projective, it is surjective onto $S$. Let $T_{\bar S_0}=X_S\times_S \bar S_0$ and $V_{\bar S_0}=V_S\times_S \bar S_0$. Note that $T_{\bar S_0}\to \bar S_0$ and $V_{\bar S_0}\to \bar S_0$ extend $T_{S_0}\to S_0$ and $V_{S_0}\to S_0$ respectively.

 Let $W_{\bar S_0}$ be the normalization of $V_{\bar S_0}$ in $k(W_{S_0})$. Note that $W_{S_0}\to V_{S_0} \to T_{S_0}$ are finite morphism of normal varieties, so it is the restriction of $W_{\bar S_0}\to V_{\bar S_0}\to T_{\bar S_0}$. We summarize the setup in the following diagram.
 \[
  \xymatrix{
  & W\ar[r]\ar[d] & W_{S_0}\ar[r]\ar[d] & W_{\bar S_0}\ar[d]\\
    V_S\ar[d] & V_T\ar[l]\ar[r]\ar[d] & V_{S_0}\ar[r]\ar[d] & V_{\bar S_0}\ar[d]\\
    X_S\ar[d] & T\ar[l]\ar[r]\ar[d] & T_{S_0}\ar[r]\ar[d] & T_{\bar S_0}\ar[d]\\
    S & \hat S_{s'}\ar[l]\ar[r] & S_0\ar[r] & \bar S_0\ar @/^1pc/[lll]
  }
 \]

 Let $\tau_S\in \bar S_0$ be such that $\bar \pi(\tau_S)=s$. Let $\bar A_0$  be a curve in $\bar S_0$ passing through $\tau_S$ such that $A_0=\bar A_0\cap S_0$ is non empty. Note that if $\tau_S\in S_0$ then the result follows from the assumptions on $S_0$.

 Replacing $\bar A_0$ by an open neighbourhood of $\tau_S$, we may assume that the fiber at all points of $\bar A_0$ of the morphism $V_{\bar A_0}\to \bar A_0$ are smooth irreducible curves. Let $A$ and $\bar A$ be the normalization of $A_0$ and $\bar A_0$ respectively. Let $\tau_A \in \bar A$ be a point lying above $\tau_S\in \bar A_0$. 

 Let $B_1$ be the finitely many points in $\bar A \setminus A$. Let $b_i:\bar A \to V_{\bar A}$ be the section obtained by the pull-back of $p_i:S\to V_S$ along the composition of the morphisms $\bar A \to \bar A_0 \to \bar S_0 \to S$. Let $B_2=\cup_{i=1}^r \image(b_i)$ and $B_V$ be the union of fibers $\cup_{\zeta \in B_1}V_{\zeta}$. Note that $W_{\bar A}\to V_{\bar A}$ is an $H$-cover \'etale away from $B_2\cup B_V$. Since $H$ is a prime-to-$p$ group, the least common multiple $m$ of the ramification indices at the generic points of $B_V$ is coprime to $p$. Let $A'\to \bar A$ be a cyclic branched cover totally ramified at the points in $B_1$ with ramification indices $m$. Let $W_{A'}\to V_{A'}$ be the pull back of $W_{\bar A}\to V_{\bar A}$. Then applying Abhyankar's lemma we conclude that $W_{A'}\to V_{A'}$ is \'etale away from $B_2 \cup B_V$ and it is unramified at the generic points of $B_V$. Since $W_{A'}\to V_{A'}$ is a finite morphism of normal varieties, the purity of Branch locus implies that the morphism is \'etale away from $B_2$. 

 Let $\tau_{A'}\in A'$ be a point lying above $\tau_A$. The fiber over $\tau_{A'}$ of covering $V_{A'} \to T_{A'}$ is $V_s\to T_s$. But this is same as $V_X\to X$. Since $A'\to\bar A$ is proper and all the fibers of $V_{\bar A_0}\to \bar A_0$ are smooth irreducible curves, same is true for the fibers of $V_{A'}\to A'$.

 Let $b_i':A'\to V_{A'}$ be the sections obtained by the pullback of $b_i$ along $A'\to A$. After shrinking $A'$ to an open neighborhood of $\tau_{A'}$ if necessary, we may assume that $\image(b_i)$ and $\image(b_j)$ are disjoint for $i\ne j$. In particular, the branch locus of the cover $W_{A'}\to V_{A'}$ is smooth. Moreover, being a prime-to-$p$ cover, it is \'etale locally a Kummer cover. Hence the fiber over every point $\tau\in A'$ of the cover $W_{A'}\to V_{A'}$ is a cover of smooth curves. Since the fibers of $W_{A'}\to A'$ are connected for all but finitely many points of $A'$, Zariski's connectedness theorem tells us that every fiber of $W_{A'}\to A'$ must be connected. In particular, $W_{\tau_{A'}}\to T_{\tau_{A'}}$ is a $\Gamma$ cover of smooth connected curves. This cover dominates $V_{\tau_{A'}}\to T_{\tau_{A'}}$ which is same as $V_X\to X$.
\end{proof}

\section{Degenerations of covers and solving embedding problems}\label{degenerates}

In this section we use Proposition \ref{patchKumar} and \ref{ALtrick} to solve certain embedding problems. This is used to obtain more examples of effective subgroups of the fundamental group of a curve for a given embedding problem. The method below combines the technique of ``adding branch points'' as in \cite{HS} and ``increasing the genus'' as in \cite{kum-joa,kum,BK}.  

\begin{thm}\label{EPsoln_eff_degeneration}
Let $C$ be a smooth affine curve over $k$ and $X$ be the smooth completion of 
$C$. Let $g$ be the genus of $X$ and $r=\# (X\setminus C)$.
Let $\cE$\eqref{EP_degen} denote the embedding problem
\begin{equation}\label{EP_degen}
  \xymatrix{
    &          &               &\pi_1(C) \ar[d]^{\alpha} \ar @{-->} [dl]\\
    1\ar[r] & H \ar[r] & \Gamma \ar[r]^{\phi} & G \ar[r]\ar[d] & 1\\
    & & & 1 .}
\end{equation}
Let $\Phi: V_X\to X$ be the $G$-cover of $X$ \'etale over $C$ corresponding to $\alpha$. 
Let $B_X = X-C$ and $r=\#(B_X)$ and consider the set $B_X$ to be $r$ marked points on $X$.
Suppose $\Phi$ has a degeneration $\Phi':V_X'\to X'$ with associated data $(S; \Phi_S:V_S \to X_S; p_1, \ldots, p_r; s; s')$ and let  $X_1,\ldots, X_m$ be the trivial components of $X'$.
Let $r_i$ be the number of smooth marked points of $X'$ lying on $X_i$   for $i=1,\ldots,m$.
Suppose there exist a group homomorphism $$\theta:\pi_1(X_1\setminus \{r_1\text{ points}\})\times\ldots\times\pi_1(X_m\setminus \{r_m\text{ points}\})
 \to H/p(H)$$
 such that the image of $\theta$ is a relative generating set for $H/p(H)$ in $\Gamma/p(H)$.
 Then there exist a $\Gamma$-cover $W_X\to X$ which corresponds to a proper solution to $\cE$\eqref{EP_degen} (i.e. $W_X \to X$ dominates $\Phi: V_X \to X$ and is \'etale over $C$).
\end{thm}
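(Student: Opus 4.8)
The plan is to reduce the embedding problem $\cE$\eqref{EP_degen} to a group-theoretic patching situation in which Proposition \ref{patchKumar} and Proposition \ref{ALtrick} apply directly. First I would use Remark \ref{split} to pass to an induced embedding problem that splits: setting $\Gamma'' = \Gamma/p(H)$ with $G'' = $ image of $G$, the kernel of the induced problem is $H/p(H)$, a prime-to-$p$ group, and crucially the Schur--Zassenhaus theorem guarantees $\Gamma'' = (H/p(H)) \rtimes G''$. By Remark \ref{split} any proper solution of the induced problem gives a proper solution of the original $\cE$\eqref{EP_degen} after composing with the projection $q$, so it suffices to solve the split embedding problem with prime-to-$p$ kernel $\bar H := H/p(H)$. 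Hence from now on I may assume $\Gamma = \bar H \rtimes G$ with $\bar H$ of order prime to $p$, and $\Phi:V_X\to X$ is the $G$-cover corresponding to $\alpha$ (composed with $G \to G''$, relabeled).

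Next I would manufacture the subgroups $H_1,\dots,H_m$ and covers needed to invoke Proposition \ref{patchKumar}. For each $i$, the hypothesis gives the $i$-th coordinate restriction $\theta_i: \pi_1(X_i \setminus \{r_i \text{ points}\}) \to \bar H$; let $H_i := \operatorname{im}(\theta_i) \le \bar H$. Since $\bar H$ is prime-to-$p$, each $H_i$ has order prime to $p$, and by the surjectivity of $\pi_1$ of a punctured curve onto its prime-to-$p$ completion (really: the tame fundamental group surjects onto any finite prime-to-$p$ quotient generated by the right number of elements — here we just need the existence of a connected $H_i$-cover realizing $\theta_i$, which follows since $\theta_i$ factors through the tame quotient) there is a connected $H_i$-Galois cover $\Phi_{X_i}: W_{X_i} \to X_i$ étale away from the $r_i$ marked points. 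The hypothesis that the image of $\theta$ is a relative generating set for $\bar H$ in $\Gamma$ means precisely that $H_1 \cup \cdots \cup H_m$ together with $G$ generate $\Gamma$: indeed $\operatorname{im}(\theta)$ is generated by $H_1,\dots,H_m$, and a relative generating set $S$ for $\bar H$ in $\Gamma$ has the property that $S \cup G$ generates $\Gamma$ (take $T = $ a generating set of $G$ in the definition of relative generating set, noting $\bar H \cup G$ generates $\Gamma$). So the hypotheses "$G, H_1,\dots,H_m$ generate $\Gamma$", "$H_i \le \bar H = H$ of order prime-to-$p$", and "$\Gamma = H \rtimes G$" of Proposition \ref{patchKumar} are all met, along with the degeneration data.

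Applying Proposition \ref{patchKumar} yields a $\Gamma$-cover $\Psi: W \to T$ over $\hat S_{s'} = \operatorname{Spec} k[[t]]$, étale away from $B \times_{X_S} T$, that dominates the pulled-back $G$-cover $\Phi_T: V_T \to T$. Then Proposition \ref{ALtrick} — applied in this context — produces a $\Gamma$-cover $W_s \to X$ dominating $\Phi: V_X \to X$ and étale over $C$; this is exactly a $\Gamma$-Galois cover of $X$ dominating the $G$-cover attached to $\alpha$ and unramified over $C$, i.e. it corresponds to a proper solution of the split embedding problem for $\pi_1(C)$, hence (via the $q$ of Remark \ref{split}) of $\cE$\eqref{EP_degen}. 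Setting $W_X := W_s$ completes the proof.

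I expect the main obstacle to be the bookkeeping in the second paragraph: verifying carefully that the relative-generating-set hypothesis on $\operatorname{im}(\theta)$ translates into "$G \cup H_1 \cup \dots \cup H_m$ generates $\Gamma$" in the \emph{full} group $\Gamma$ (not merely in $\Gamma/p(H)$), and that connected $H_i$-covers with the prescribed branch points exist — this last point needs that each $\theta_i$, being a homomorphism from the fundamental group of a punctured curve into a prime-to-$p$ group, is realized by a (possibly disconnected) tame cover whose connected component is $H_i$-Galois, which is where the characteristic-$p$ tameness and the structure of the tame fundamental group enter. The geometric input (patching, Lefschetz--Abhyankar deformation) is entirely packaged in the two cited propositions, so the remaining work is group theory and the reduction via Remark \ref{split}.
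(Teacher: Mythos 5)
Your middle section --- defining $H_i=\image(\theta_i)$, realizing each $H_i$ by a tame connected cover of $X_i$ branched at the $r_i$ marked points, checking that $G,H_1,\dots,H_m$ generate, and then invoking Propositions \ref{patchKumar} and \ref{ALtrick} --- is exactly the paper's argument for the prime-to-$p$ stage. But your opening reduction contains two genuine errors, and as a result your proof is missing an essential final step. First, Remark \ref{split} does not do what you want: it replaces $\Gamma$ by the \emph{larger} group $\Gamma'=\Gamma\times_G G'$, which comes with a surjection $q:\Gamma'\to\Gamma$, and that is why proper solutions push forward. Passing to the quotient $\Gamma''=\Gamma/p(H)$ is the opposite operation: the surjection goes $\Gamma\to\Gamma/p(H)$, there is no projection $\Gamma/p(H)\to\Gamma$, and a proper solution $\pi_1(C)\twoheadrightarrow\Gamma/p(H)$ is not a proper solution of $\cE$\eqref{EP_degen} and cannot be made into one by ``composing with $q$.'' Second, Schur--Zassenhaus does not give $\Gamma''=(H/p(H))\rtimes G$: it requires $\gcd(|H/p(H)|,|G|)=1$, whereas you only know $|H/p(H)|$ is prime to $p$; e.g.\ $\Gamma=\ZZ/9$, $H=\ZZ/3$, $p=5$ gives a non-split $\Gamma''$.

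What the paper actually does after obtaining the $\Gamma/p(H)$-cover (i.e.\ a proper solution $\pi_1(C)\twoheadrightarrow\Gamma/p(H)$ dominating $\Phi$) is to solve a \emph{second} embedding problem, namely $(\pi_1(C)\to\Gamma/p(H),\ \Gamma\to\Gamma/p(H))$, whose kernel $p(H)$ is quasi-$p$; this has a proper solution by Pop's theorem (or Harbater's version), and composing gives the desired $\Gamma$-cover. This quasi-$p$ lifting step is not optional --- when $p(H)\neq 1$ there is no other way to climb back from $\Gamma/p(H)$ to $\Gamma$ --- and it is the piece your proposal omits. You should delete the Schur--Zassenhaus reduction, run your second and third paragraphs verbatim for the quotient problem with group $\bar\Gamma=\Gamma/p(H)$ and kernel $\bar H=H/p(H)$, and then append the quasi-$p$ embedding problem argument to finish.
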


\begin{proof} 
 Let $\bar{\Gamma}=\Gamma/p(H)$,  $\bar{H} = H/p(H)$, and  $H_i=\theta(\pi_1(X_i\setminus r_i \text{ points}))$ for $i=1,\ldots m$. 
 Consider the embedding problem $\cE$\eqref{EP_bargamma}
 \begin{equation}\label{EP_bargamma}
  \xymatrix{
    &          &               &\pi_1(C) \ar[d]^{\alpha} \ar @{-->} [dl]\\
    1\ar[r] & \bar{H} \ar[r] & \bar{\Gamma} \ar[r]^{\phi} & G \ar[r]\ar[d] & 1\\
    & & & 1 .}
\end{equation}
This has prime to $p$ kernel $\bar H$.
Since the image of $\theta$ is a relative generating set for $\bar H$ in $\bar \Gamma$, the subgroups $H_1,\ldots H_m$ and $G$ together generate $\bar{\Gamma}$. Moreover, by definition of $H_i$, there exist an $H_i$-cover of $X_i$ \'etale away from $r_i$ points for $1\le i \le m$. So the hypotheses of Proposition \ref{patchKumar} and \ref{ALtrick} for the embedding problem $\cE$\eqref{EP_bargamma} are satisfied.  By the conclusion of Proposition \ref{ALtrick} there exist a $\Gamma$-cover $W_s\to X$ dominating the $G$-cover $\Phi:V_X\to X$ which is \'etale over $C$.  This is a solution to $\cE$\eqref{EP_bargamma}.
 
 Since $p(H)$ is a quasi-$p$ group, by  \cite[Corollary 4.6]{hacrelle} or \cite[Theorem B]{pop}, the following embedding problem 
has a solution.
 \[ 
 \xymatrix{
    &          &               &\pi_1(C) \ar[d] \ar @{-->} [dl]\\
    1\ar[r] & p(H) \ar[r] & \Gamma \ar[r]^{\phi} & \bar{\Gamma} \ar[r]\ar[d] & 1\\
    & & & 1 }
 \]
This provides the required $\Gamma$-cover.
\end{proof}

As a consequence of Theorem \ref{EPsoln_eff_degeneration}, we obtain the following: 

\begin{cor}\label{num_rel-rank}
Let $C$ be a smooth affine curve over $k$ and $X$ be the smooth completion of 
$C$. Let $g$ be the genus of $X$ and $r=\# (X\setminus C)$.
Let $\cE$\eqref{EP_degen} denote the embedding problem above for $\pi_1(C)$ and let $\mu$ be the relative rank of $H/pH$ in $\Gamma/pH$.
Let $\Phi: V_X\to X$ be the $G$-cover of $X$ \'etale over $C$ corresponding to $\alpha$. Suppose $\Phi$ has a degeneration $\Phi':V_X'\to X'$ with a trivial component $X_1$. Let $r_1$ be the number of marked points of $X'$ lying on $X_1$, $g(X_1)$ be the 
 genus of $X_1$ and $n_{X_1}=2g(X_1)+r_1-1$. If one of the following holds: 
  \begin{enumerate}
   \item  $r_1\ge 1$ and $n_{X_1}\ge \mu$ 
   \item $r_1=0$ and $g(X_1)\ge \mu$
  \end{enumerate}
then there exist a $\Gamma$-cover of $X$ dominating $\Phi$ which is \'etale over $C$.
\end{cor}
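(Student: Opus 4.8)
The plan is to apply Theorem~\ref{EPsoln_eff_degeneration} with $m=1$, taking $X_1$ to be the given trivial component of the degeneration $\Phi'$. Every hypothesis of that theorem is already available in our situation except for the existence of a group homomorphism $\theta:\pi_1(X_1\setminus\{r_1\text{ points}\})\to H/p(H)$ whose image is a relative generating set for $H/p(H)$ in $\Gamma/p(H)$; so the entire task is to manufacture such a $\theta$ out of the numerical hypothesis $n_{X_1}\ge\mu$ (when $r_1\ge1$) or $g(X_1)\ge\mu$ (when $r_1=0$).

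First I would fix, using the definition of relative rank, a relative generating set $\{h_1,\dots,h_\mu\}\subset H/p(H)$ of cardinality $\mu=\rank_{\Gamma/p(H)}(H/p(H))$. Next I would record the elementary fact that any subset of $H/p(H)$ containing a relative generating set is again a relative generating set: if $T\subset\Gamma/p(H)$ is such that $(H/p(H))\cup T$ generates $\Gamma/p(H)$, then $\{h_1,\dots,h_\mu\}\cup T$ generates, and hence so does any larger subset, in particular $(\image\theta)\cup T$. Thus it suffices to build $\theta$ whose image merely \emph{contains} $\{h_1,\dots,h_\mu\}$.

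Since $H/p(H)$ is finite of order prime to $p$, any homomorphism into it from $\pi_1(X_1\setminus\{r_1\text{ points}\})$ factors through the maximal prime-to-$p$ quotient of that fundamental group, which is classically understood via Grothendieck's comparison of the tame fundamental group with the topological one. If $r_1\ge1$, then $X_1\setminus\{r_1\text{ points}\}$ is a smooth affine curve of genus $g(X_1)$ with $r_1$ punctures, so this quotient is a free pro-(prime-to-$p$) group of rank $2g(X_1)+r_1-1=n_{X_1}$; since $n_{X_1}\ge\mu$, I send $\mu$ of the free topological generators to $h_1,\dots,h_\mu$ and the rest to $1$. If $r_1=0$, then $X_1$ is smooth projective of genus $g:=g(X_1)$, whose maximal prime-to-$p$ quotient of $\pi_1$ is the pro-(prime-to-$p$) completion of the surface group $\langle a_1,\dots,a_g,b_1,\dots,b_g\mid\prod_{i=1}^g[a_i,b_i]\rangle$; since $g\ge\mu$, the assignment $a_i\mapsto h_i$ for $1\le i\le\mu$, $a_i\mapsto1$ for $i>\mu$, and $b_i\mapsto1$ for all $i$ is compatible with the single relation (every commutator becomes trivial because each $b_i$ maps to $1$) and so defines $\theta$ with image $\langle h_1,\dots,h_\mu\rangle$. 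In either case $\image\theta\supseteq\{h_1,\dots,h_\mu\}$ is a relative generating set, so Theorem~\ref{EPsoln_eff_degeneration} applies and produces a $\Gamma$-cover of $X$ dominating $\Phi$ and \'etale over $C$.

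I do not expect a substantial obstacle here: the genuine content of the corollary lies entirely in Theorem~\ref{EPsoln_eff_degeneration}, and what is left is essentially bookkeeping. The only point requiring care is the identification of the maximal prime-to-$p$ quotient of $\pi_1(X_1)$ (resp.\ of $X_1$ minus $r_1$ points) with the pro-(prime-to-$p$) completion of a surface (resp.\ free) group; this is also exactly what forces the asymmetry between the two cases, namely the full ``rank'' $n_{X_1}$ being usable in the affine case versus only $g(X_1)$ ``free'' generators being available in the projective case once the relation is killed.
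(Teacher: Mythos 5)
Your proposal is correct and follows essentially the same route as the paper: the paper likewise takes the $\mu$ relative generators $u_1,\dots,u_\mu$ of $H/p(H)$ in $\Gamma/p(H)$, lets $H'$ be the prime-to-$p$ group they generate, and invokes SGA~1 to produce an epimorphism $\pi_1(X_1\setminus\{\text{marked points}\})\twoheadrightarrow H'$ in both cases (1) and (2), then applies Theorem~\ref{EPsoln_eff_degeneration}. Your write-up merely makes explicit the two points the paper leaves implicit — that a superset of a relative generating set is again one, and the free-group versus surface-group dichotomy behind the $n_{X_1}$ versus $g(X_1)$ asymmetry.
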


\begin{proof}
 Let $u_1,\ldots, u_{\mu}$ be the relative generators of $H/pH$ in $\Gamma/pH$ and $H'$ be the subgroup of $H/pH$ generated by $u_1,\ldots, u_{\mu}$. Note that $H'$ is a prime-to-$p$ group generated by $\mu$ elements, so in both scenario $(1)$ and $(2)$ by \cite{SGA1} there exist an epimorphism from $\pi_1(X_1\setminus \{\text{marked points}\})$ to $H'$. Hence the corollary follows from Theorem \ref{EPsoln_eff_degeneration}.
\end{proof}

The above result restated in the terminology of effective subgroups becomes the following statement.
\begin{cor}\label{main.result-general}
Let $C$, $X$, $g$, $r$ and $\mu$ be as in Corollary \ref{num_rel-rank}.
Let $\Pi\normal \pi_1(C)$ be of finite index such that the embedding problem 
$\cE$\eqref{EP_degen} restricts to $\Pi$. Let $Z$ be the cover of $X$ \'etale over $C$ with 
$\pi_1(Z)=\Pi$ and $\Phi: V_X\times_X Z\to Z$ be the induced $G$-cover. 
Suppose $\Phi$ has a degeneration with a trivial component $X_1$ such that one of the following holds:
  \begin{enumerate}
   \item  $r_1\ge 1$ and $n_{X_1}\ge \mu$ 
   \item $r_1=0$ and $g(X_1)\ge \mu$
  \end{enumerate}
 then $\Pi$ is an effective subgroup for the embedding problem $\cE$\eqref{EP_degen}.
\end{cor}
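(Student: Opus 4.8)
The plan is to deduce Corollary~\ref{main.result-general} directly from Corollary~\ref{num_rel-rank} by interpreting the data of the finite-index normal subgroup $\Pi$ as a cover $Z \to X$ and treating the induced $G$-cover $V_X \times_X Z \to Z$ as the $G$-cover attached to the restricted embedding problem. First I would observe that since $\cE$\eqref{EP_degen} restricts to $\Pi$, the composite $\alpha|_\Pi : \pi_1(Z) = \Pi \to G$ is still surjective, so it corresponds to a pointed $G$-Galois cover of $Z$ which is \'etale over $C$ (as $\Pi$ corresponds to a cover \'etale over $C$); this cover is precisely $\Phi : V_X \times_X Z \to Z$. Thus the restricted embedding problem $\cE|_\Pi = (\alpha|_\Pi : \pi_1(Z) \to G, \phi : \Gamma \to G)$ has the same kernels $H$ and $p(H)$, hence the same relative rank $\mu = \rank_{\Gamma/p(H)}(H/p(H))$.

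Next I would apply Corollary~\ref{num_rel-rank} verbatim with $Z$ in the role of the base curve (here $D = C \cap Z$, or rather $Z$ minus the preimage of $X \setminus C$, plays the role of the affine curve, and $Z$ its smooth completion): by hypothesis the $G$-cover $\Phi : V_X \times_X Z \to Z$ admits a degeneration with a trivial component $X_1$ satisfying either $(1)$ $r_1 \ge 1$ and $n_{X_1} \ge \mu$, or $(2)$ $r_1 = 0$ and $g(X_1) \ge \mu$. Corollary~\ref{num_rel-rank} then yields a $\Gamma$-cover $W \to Z$ dominating $\Phi$ and \'etale over the affine part $C \cap Z$. By the correspondence between pointed $\Gamma$-Galois covers dominating $\Phi$ and proper solutions of the embedding problem, $W \to Z$ gives a proper solution to $\cE|_\Pi$, i.e.\ a surjection $\pi_1(Z) = \Pi \twoheadrightarrow \Gamma$ with $\phi$ composed with it equal to $\alpha|_\Pi$. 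This is exactly the statement that $\Pi$ is effective for $\cE$\eqref{EP_degen}.

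The only real point requiring care — and the step I expect to be the main (minor) obstacle — is the bookkeeping identifying the pulled-back cover $V_X \times_X Z \to Z$ with the $G$-cover corresponding to $\alpha|_\Pi$, and checking that it is \'etale over the relevant affine curve so that Corollary~\ref{num_rel-rank} genuinely applies. Since $Z \to X$ is \'etale over $C$ and $\Phi : V_X \to X$ is \'etale over $C$, the fiber product $V_X \times_X Z \to Z$ is \'etale over the preimage of $C$ in $Z$, which is the affine curve whose $\pi_1$ we are restricting; and the branch points of $Z \to X$ add only to the ``boundary'' of $Z$, consistent with $r_1$ being the number of marked points on $X_1$. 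I would note briefly that $W \to Z$ being \'etale over this affine part is precisely the geometric translation of the solution being a homomorphism $\pi_1(Z) \to \Gamma$ rather than merely a homomorphism on a smaller open subgroup. No further argument is needed: the substance is entirely in Corollary~\ref{num_rel-rank}, and this corollary is just its restatement in the language of effective subgroups, exactly as the sentence preceding the statement indicates.
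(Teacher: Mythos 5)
Your proposal is correct and follows essentially the same route as the paper: the paper's proof likewise observes that $\Phi: V_X\times_X Z\to Z$ is a $G$-cover \'etale over the preimage of $C$ admitting the required degeneration, applies Corollary \ref{num_rel-rank} with $Z$ as the base to obtain a dominating $\Gamma$-cover \'etale over the preimage of $C$, and reads this off as a proper solution to the restricted embedding problem. Your additional bookkeeping about identifying $V_X\times_X Z\to Z$ with the cover corresponding to $\alpha|_\Pi$ is a fine (if brief) elaboration of what the paper leaves implicit.
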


\begin{proof}
 Note that $\Phi$ is a $G$-cover of $Z$ \'etale over the preimage of $C$ and it has degeneration with the same properties as in the hypothesis of the above corollary. So using that corollary, we obtain a $\Gamma$-cover of $Z$ dominating $\Phi$ which is \'etale over the preimage of $C$. This $\Gamma$-cover provides a solution to the embedding problem $\cE$\eqref{EP_degen} restricted to $\Pi$. Hence $\Pi$ is an effective subgroup.
\end{proof}

\section{The case of affine line}\label{Affine Line}
Let $C$ be the affine line $\Aff^1_x=\spec(k[x])$ and $\cE =(\alpha:\pi_1(C) \to G, \phi:\Gamma \to G)$ be an embedding problem for $\pi_1(C)$ with $H = \ker{\phi}$. In \cite[Theorem 1.3]{BK}, it was shown that there are infinitely many index $p$ effective subgroups of $\pi_1(\Aff^1_x)$ for the embedding problem $\cE$. 

In \cite[Theorem 5]{HS} (see Proposition \ref{relativerank}), it was shown that if the \'etale cover $D\to \Aff^1_x$ is such that the number of points above $x=\infty$ in the smooth completion of $D$ is large enough then $\pi_1(D)$ is an effective subgroup of $\pi_1(C)$ as long as the embedding problem $\cE$ restricts to $\pi_1(D)$.

In this section we will demonstrate some sufficient conditions on a subgroup of $\pi_1(\Aff^1_x)$ to be effective depending only on rank of $H$ and the cover corresponding to $\alpha$. In fact, we will show that in the collection of all $p$-cyclic \'etale covers of high enough genus of $\Aff^1_x$, every member $D\to \Aff^1_x$ leads to an effective subgroup $\pi_1(D)\normal \pi_1(\Aff^1_x)$.
 
Assume $\Pi$ is an index $p$ subgroup of $\pi_1(C)$. Let
$D\to C$ be the cover corresponding to $\Pi$, i.e., $D$ be the normalization
of $C$ in $(K^{un})^{\Pi}$.  
Since $D$ is an \'etale $p$-cyclic cover of the affine line, by Artin-Schrier theory it is given by the equation $z^p-z-f(x)$ for some non-constant polynomial $f(x)\in k[x]$.  
Let $r$ be the degree of $f(x)$. By changing $f(x)$ if necessary we may assume $r$ is prime to $p$. 
Let $Z \to X$ be the corresponding morphism between their smooth completions. 
The genus of $Z$ is $g_Z=(p-1)(r-1)/2$ and $Z$ is totally ramified at infinity. So $n_D = 2g_Z+r_D-1=2g_Z=(p-1)(r-1)$.

Let $\cE =(\alpha:\pi_1(C) \to G, \phi:\Gamma \to G)$ be an embedding problem for $\pi_1(C)$ with $H = \ker{\phi}$.
As observed in the introduction if $n_D$ is less than the rank of $\Gamma/p(\Gamma)$
then $\Pi$ can not be effective. But $n_D\ge \rank(\Gamma/p(\Gamma))$ is certainly not a sufficient condition 
for $\Pi=\pi_1(D)$ to be effective, as indicated by the following example.

\begin{ex}
 Let $C=\Aff^1$, $\Gamma$ a quasi-$p$ group, $G=\ZZ/p\ZZ$ and $H$ a nontrivial prime-to-$p$ group. Note that $\rank(\Gamma/p\Gamma)=0$. Suppose the map $\alpha:\pi_1(C)\to G$ be induced by a $p$-cyclic \'etale cover $V_C\to C$ where $V_C$ is also isomorphic to $\Aff^1$. Let $D\to C$ be any $p$-cyclic \'etale cover linearly disjoint from $V_C\to C$ and $U=D\times_C V_C$, then the embedding problem $\cE$ restricts to $\pi_1(D)\normal \pi_1(C)$ but if $n_U < \rank(H/p(H))$ then the embedding problem $\cE$ restricted to $\pi_1(D)$ has no solution. This is because the existence of a solution to the embedding problem implies that $H/p(H)$ is a quotient of $\pi_1(U)$. But this is impossible if $n_U<\rank(H/p(H))$. 

\end{ex}

Though we shall see that if $C=\Aff^1$, $D\to C$ is $p$-cyclic \'etale and $n_D \ge 2\rank(H/p(H))$ then $\pi_1(D)$ is indeed an effective subgroup for the embedding problem $\cE$ in many cases.

\begin{pro}\label{builddegen}\label{main.result-affine.line}
Let $\cE$\eqref{EPline} be the embedding problem
\begin{equation}\label{EPline}
  \xymatrix{
    &          &               &\pi_1(\Aff^1) \ar[d]^{\alpha} \ar @{-->} [dl]\\
    1\ar[r] & H \ar[r] & \Gamma \ar[r]^{\phi} & G \ar[r]\ar[d] & 1\\
    & & & 1 }
\end{equation}
Let $V \to \Aff^1_x$ be the $G$-Galois cover corresponding to $\alpha$ and $V_X\to \PP^1_x=X$ be the morphism corresponding to the smooth completion. Let $g$ be such that there is a homomorphism $\theta$ from the surface group $\Pi_g$ to $H/p(H)$ with the property that $\image(\theta)$ is a relative generating set for $H/p(H)$ in $\Gamma/p(H)$.
Let $D \to \Aff^1_x$ be an \'etale  $p$-cyclic cover such that the genus of the smooth completion $Z$ of $D$ is at least $g$. Let $V_Z$ be the normalization of $V_X\times_X Z$. Suppose that the genus of the normalization of $Z'\times_X V_X$ is same as $g(V_Z)$ for all but finitely $p$-cyclic covers $Z'\to X$ branched only at $x=\infty$ with the genus of $Z'$ same as $g(Z)$. Then $\pi_1(D)$ is an effective subgroup for the embedding problem $\cE$\eqref{EPline}.
\end{pro}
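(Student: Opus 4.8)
The plan is to build a degeneration of $\Phi: V_X \to \PP^1_x$ whose trivial component is a curve of genus $g(Z)$, and then invoke Theorem \ref{EPsoln_eff_degeneration} (via Corollary \ref{main.result-general}). The key point is to reinterpret the hypothesis: we want to view $\pi_1(D)$ as $\Pi \normal \pi_1(\Aff^1_x)$ and consider the induced $G$-cover $\Phi_Z: V_X\times_X Z \to Z$, then exhibit a degeneration of $\Phi_Z$ with a trivial component $X_1$ of genus exactly $g(Z)$ and with $r_1 = 1$ marked point (the unique point over $x=\infty$, since $Z \to X$ is totally ramified there). Then $n_{X_1} = 2g(X_1) + r_1 - 1 = 2g(Z)$. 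Since $g(Z) \ge g$ and $g$ is chosen so that $\Pi_g$ surjects (via $\theta$) onto a relative generating set for $H/p(H)$ in $\Gamma/p(H)$, the hypothesis of Theorem \ref{EPsoln_eff_degeneration} with $m=1$ is met: one takes $\theta: \pi_1(X_1 \setminus \{1\ \text{point}\}) \to \pi_1(X_1) = \Pi_{g(Z)} \to \Pi_g \to H/p(H)$, using that $\Pi_{g(Z)}$ surjects onto $\Pi_g$ when $g(Z) \ge g$.

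\textbf{Constructing the degeneration.} The heart of the argument is producing a degeneration of $\Phi_Z : V_X \times_X Z \to Z$ whose trivial component has genus $g(Z)$. The idea is to degenerate $Z$ itself inside the family of $p$-cyclic covers of $\PP^1_x$: consider the family of Artin--Schreier covers given by $w^p - w = f_t(x)$ for a one-parameter family of polynomials $f_t$ of fixed degree $r$ (with $(p-1)(r-1)/2 = g(Z)$), degenerating to a reducible configuration. Concretely, one lets the branch point at $x=\infty$ split, or more naturally one uses a family whose special fiber $X'$ consists of a component $X_1$ isomorphic to $Z$ (carrying a copy of the original cover $\Phi_Z$ — hence \emph{not} trivial on $X_1$) together with another component. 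This is backwards; instead, the right move is to arrange $X'$ so that the component of genus $g(Z)$ is the \emph{trivial} component: we degenerate the $G$-cover $V_X \to X$ over $\PP^1_x$ to a configuration $X' = X_1 \cup X_0$ where $X_1$ is a curve of genus $g(Z)$ attached to $X_0$ along which $V_X$ becomes the induced trivial $G$-cover, while $X_0$ carries the nontrivial part. One realizes this by first using formal patching / deformation techniques (as in \cite{HS}, \cite{BK}) to deform $\Phi: V_X \to \PP^1_x$ across a family in which a genus-$g(Z)$ trivial tail sprouts at $x=\infty$; the Artin--Schreier cover $D \to \Aff^1_x$ then corresponds to gluing this tail appropriately. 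The condition that the genus of the normalization of $Z' \times_X V_X$ equals $g(V_Z)$ for all nearby $p$-cyclic $Z'$ of the same genus is exactly what Lemma \ref{smoothfibers} needs to guarantee that the family of covers $V_{Z'} \to Z'$ has smooth fibers, so that the deformation/degeneration machinery of Section \ref{Formal Patching} and Proposition \ref{ALtrick} applies without the total space acquiring bad singularities.

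\textbf{Finishing.} Once the degeneration of $\Phi_Z$ with trivial component $X_1$ of genus $g(Z) \ge g$ and $r_1 = 1$ is in hand, apply Corollary \ref{main.result-general} (case (1), with $n_{X_1} = 2g(Z) \ge 2g \ge \mu$ — or more precisely apply Theorem \ref{EPsoln_eff_degeneration} directly with the homomorphism $\theta$ factoring through $\Pi_{g(Z)} \onto \Pi_g$). This yields a $\Gamma$-cover of $Z$ dominating $\Phi_Z$ and étale over $D$, i.e.\ a proper solution to $\cE$\eqref{EPline} restricted to $\pi_1(D)$. Hence $\pi_1(D)$ is effective.

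\textbf{Main obstacle.} The genuinely hard step is the \emph{construction of the degeneration with the prescribed trivial component of genus exactly $g(Z)$}, compatibly with the specific $p$-cyclic cover $D$. One must produce a connected total family $X_S \to S$ over a smooth curve $S$ with a $G$-cover $V_S \to X_S$ étale away from the marked section over $x=\infty$, whose generic fiber is $\Phi_Z: V_X\times_X Z \to Z$ and whose special fiber has the genus-$g(Z)$ component trivial — and here the technical genus-constancy hypothesis on $Z' \times_X V_X$ is precisely what is invoked to control the fibers of the deformation (via Lemma \ref{smoothfibers}), ensuring the Lefschetz--Abhyankar argument of Proposition \ref{ALtrick} can specialize back to the curve $Z$ itself. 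Verifying that this genus condition holds for "most" $g_Z$ (the content of the corollaries and remark referenced in the introduction) is the source of the "technical condition" caveat.
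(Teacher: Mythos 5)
Your overall strategy is the paper's: produce a degeneration of the $G$-cover over $Z$ having a trivial component of genus $g(Z)$, and feed it to Theorem \ref{EPsoln_eff_degeneration}; your identification of the role of the genus-constancy hypothesis (smoothness of fibers via Lemma \ref{smoothfibers}) and your finishing step (factoring $\theta$ through $\Pi_{g(Z)}\onto\Pi_g$) are also correct in outline. But the construction of the degeneration is not a step you may defer --- it is essentially the entire content of the paper's proof --- and the one concrete thing you say about it is wrong: you have the genus-$g(Z)$ trivial tail ``sprout at $x=\infty$.'' A component of the degenerate base sitting over $x=\infty$ cannot be a trivial component, because $V_X\to\PP^1_x$ is branched over $x=\infty$ (it is a nontrivial cover of $\PP^1$ \'etale over $\Aff^1$), so the pullback of $V_X$ to such a component is not $\Ind_e^G$ of that component. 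The tail must be attached over a point of $\Aff^1_x$ where $\Phi$ is \'etale; the paper uses $x=0$. Concretely, the paper takes the family of Artin--Schreier covers $z^p-z=f(y^{-1})$, $f(w)=w^r+t_{r-1}w^{r-1}+\cdots+t_1w$, over $S=\spec k[t_1,\ldots,t_r]$, and glues the $x$- and $y$-lines by the relation $t_r=xy$: for $t_r\neq 0$ the fiber is a smooth $p$-cyclic cover of $\PP^1_x$ branched only at $x=\infty$ (and a suitable choice of the $t_i$ recovers exactly $Z\to\PP^1_x$), while at $t_r=0$ the base degenerates to $\PP^1_x\cup\PP^1_y$ meeting at the origin and all the genus concentrates in a curve $Y$ lying over $x=0$, where $V_X\to X$ is \'etale, so that $V_X$ pulls back to $|G|$ disjoint copies of $Y$. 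The genus-constancy hypothesis is then what guarantees (via Lemma \ref{smoothfibers}) that the locus of parameters with smooth irreducible fibers of the normalized pullback $V_T\to T$ is dense and contains the specific parameter corresponding to $D$. Without some such explicit family your proof does not go through.

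A smaller inaccuracy: in the resulting degeneration the marked point over $x=\infty$ lies on the \emph{nontrivial} component (the copy of $X=\PP^1_x$), so the trivial component $Y$ carries $r_1=0$ marked points, not $r_1=1$. This is precisely why the hypothesis of the proposition is phrased in terms of the complete surface group $\Pi_g$ rather than a free group, and why one invokes the surjection from $\pi_1(Y)$ onto the prime-to-$p$ completion of $\Pi_{g(Y)}$. It does not affect the validity of your finishing step, but it signals that the geometry of the degeneration is not the one you had in mind.
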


\begin{proof} 
Since $D\to \Aff^1_x$ is $p$-cyclic \'etale cover, it is given by the equation $Z^p-Z-(a_rx^r+a_{r-1}x^{r-1}+\ldots+a_1x)$ where $r$ is coprime to $p$, $a_r\ne 0$ and $a_{ip}=0$ for $0\le i\le [r/p]$ (\cite{pries}).
Let $A=k[t_1,\dots,t_r]$ where $t_{ip}=0$ for $1\le i\le [r/p]$ and
$t_j$ are indeterminates if $p$ does not divide $j$. Let $S=\spec(A)$ and $S^o=S\setminus \{t_r=0\}$. 
Let ${X_S}=X\times_k S$ and $Y_S$ be the normal cover of $\PP^1_y\times_k S$ given by $z^p-z-f(y^{-1})$ where
\begin{equation}\label{f}
  f(w)=w^r+t_{r-1}w^{r-1}+\ldots t_1w^1
\end{equation}
Note that the genus of the normalization of the fiber of $Y_S\to S$ for any point of $S$ is constant and $Y_S$ is normal. Hence every fiber of $Y_S$ over a closed point of $S$ is a smooth curve (Lemma \ref{smoothfibers}). 
Also note that the cover $Y_S\to \PP^1_y\times_k S$ is branched only
at $y=0$ in $\PP^1_y\times S$ because the discriminant is -1 on $\Aff^1_{y^{-1}}\times S$. Let $Y\to \PP^1_y$ be the fiber of $Y_S\to \PP^1_y\times_k S$ at the point $(t_1=0,\ldots,t_r=0)\in S$ and note that $y=0$ defines a unique point $\tau$ in $Y$.

Let $F$ be the locus of $t_r-xy=0$ in $\PP^1_x\times_k\PP^1_y\times_k S$ and $Y_F=Y_S\times_{\PP^1_y\times_k S}F$. Let $X_F={X_S}\times_{\PP^1_x\times_k S}F$ and $T$ be the fiber product $X_F\times_F Y_F$. Note that $X_F=F$ and $T=Y_F$. By definition, $\Psi_X:V_X\to X$ is a $G$-cover of $X$ \'etale over $\Aff^1_x$. 

\[
  \xymatrix{
  &  &  & T=Y_F\ar[ld]\ar[rd]\\
              &                            & X_F=F\ar[ld]\ar[d]\ar[rd] &  & Y_S\ar[ld]\\
    V_X\ar[d] & X_S=\PP^1_x\times S\ar[ld] & \PP^1_x\times \PP^1_y\times S\ar[r]\ar[l] & \PP^1_y\times S\\
    X=\PP^1_x 
  }
 \]

The morphism $T\to X_F$ is a family of covers parametrized by $S$. Let $s'\in S$ be the point $(t_1=0,\ldots,t_r=0)$ and $s$ be any point in $S^o$. Note that the morphism $F\to \PP^1_y\times S$ is an isomorphism away from $t_r=0$. So the fiber $T_s=Y_{S,s}$ is smooth. Let $T_{s'}\to X_{s'}$ be the fiber of $T\to X_F$ over $s'$. Then $X_{s'}$ is the union of $X=\PP^1_x$ and $\PP^1_y$ intersecting transversally at $(x=0,y=0)$ and $T_{s'}$ is the union $X=\PP^1_x$ and $Y$ intersecting transversally at the point $(x=0,\tau)$. The fiber over $s$, $T_s\to X_s$ is a $p$-cyclic cover of smooth curves. Since at $s\in S^o$, $t_r\ne 0$, the projection map $X_F\to X$ restricted to $X_s$ is an isomorphism. Hence $X_s=\PP^1_x=X$. Moreover, if $s$ is the point $(t_1=b_1,\ldots, t_r=b_r)$, then the cover $T_s$ is locally given by the equation $$Z^p-Z-(\frac{1}{b_r^r}x^r+\frac{b_{r-1}}{b_r^{r-1}}x^{r-1}+\ldots +\frac{b_1}{b_r}x)$$ because $y^{-1}=x{t_r}^{-1}$ on $T$.

Let $\Phi_T:V_T\to T$ be the normalized pullback of $V_X\to X$ to $T$ along the morphism $T\to X_F\to X_S\to X$. So $V_T\to T$ is a $G$-cover. Let 
$$S^1=\{s\in S^o: \text{ the genus of the normalization of }V_X \times_X T_s= g(V_Z) \}.$$ 
By hypothesis, $S^1$ is an open dense subset of $S^o$. The normalization of the fiber $V_{T,s}$ of $V_T$ at $s\in S^1$ is the normalization of $V_X\times_X T_s$. The genus of the normalization of $V_{T,s}$ for all $s\in S^1$ is constant. Moreover $V_T$ is normal, hence $V_{T,s}$ is smooth for all $s\in S^1$ (Lemma \ref{smoothfibers}). Hence $\Phi_s$ is a cover of smooth irreducible curves dominating $V_X$ for all $s\in S^1$. Let the fiber of $\Phi_T$ at $s'$ be denoted by the morphism $V_{s'}\to T_{s'}$. Since $V_X\to X$ is \'etale at $x=0$ and $T_{s'}$ is the union of $Y$ and $X$ intersecting only at $x=0$ in $X$ and $\tau$ in $Y$, the fiber $V_{s'}$ is the union of $V_X$ and $|G|$ copies of $Y$  which intersect in $V_X$ at the $|G|$ preimages of $x=0$ and in $Y$ at $y=0$. In particular for any $s\in S^1$, $\Phi_{s'}$ is a degeneration of $\Phi_s$ and the irreducible component $Y$ of $T_{s'}$ is a trivial component of $\Phi_{s'}$.

Finally choosing $b_i$, and hence $s\in S^o$, appropriately we may assume $T_s\to \PP^1_x$ is the same cover as $Z\to \PP^1_x$. More precisely, let $b_r=(a_r)^{-1/r}$, $b_i=a_ib_r^i$ for $1\le i \le r-1$ then the local equation of $T_s$ is same as that of $D$. Hence $k(T_s)=k(D)$ and the genus $g(T_s)=g(Y)$ is at least $g$. Also note that $s\in S^1$.

Note that by \cite{SGA1} there exist an epimorphism from $\pi_1(Y)$ to the prime to $p$ part of $\Pi_g$. Composing this with $\theta$ and noting that $H/p(H)$ is a prime to $p$ group, we obtain a homomorphism $\tilde\theta:\pi_1(Y)\to H/p(H)$ such that $\image(\tilde \theta)$ is a relative generating set for $H/p(H)$ in $\Gamma/p(H)$. So applying Theorem \ref{EPsoln_eff_degeneration}, we obtain a $\Gamma$-cover of $T_s$ which dominates the $G$-cover $V_X\times_XT_s\to T_s$. Hence $\pi_1(D)$ is an effective subgroup of $\pi_1(\Aff^1_x)$ for the embedding problem $\cE$\eqref{EPline}.

\end{proof}

For a Galois cover $U\to V$ of smooth connected $k$-curves, by the upper jumps at a branch point $v\in V$ we mean the upper jumps of the ramification filtration of the local field extension $\hat \cO_{U,u}/ \hat \cO_{V,v}$ for any point $u\in U$ lying above $v$. Note that the set upper jumps does not depend upon the choice of $u\in U$ lying above $v\in V$ since $U\to V$ is a Galois cover.

\begin{cor}\label{cor1:main.result-affine.line}
 Let $\cE$\eqref{EPline} be the embedding problem in Proposition \ref{main.result-affine.line}, $V_X \to \PP^1_x$ be the $G$-Galois cover corresponding to $\alpha$ and $g$ be as in Proposition \ref{main.result-affine.line}. Let $D \to \Aff^1_x$ be an \'etale  $p$-cyclic cover such that the genus of the smooth completion $Z$ of $D$ is at least $g$ and the upper jump of the cover $Z\to \PP^1_x$ at $x=\infty$ is different from all the upper jumps of $V_X\to \PP^1_x$ at $x=\infty$. Then $\pi_1(D)$ is an effective subgroup for the embedding problem $\cE$\eqref{EPline}.
\end{cor}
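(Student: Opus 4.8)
The plan is to deduce Corollary \ref{cor1:main.result-affine.line} from Proposition \ref{main.result-affine.line} by verifying the technical ``constant genus'' hypothesis under the stated assumption about upper jumps. Recall that Proposition \ref{main.result-affine.line} requires: for all but finitely many $p$-cyclic covers $Z' \to X = \PP^1_x$ branched only at $x = \infty$ with $g(Z') = g(Z)$, the normalization of $Z' \times_X V_X$ has genus equal to $g(V_Z)$. So the task is to show that the upper-jump condition forces this. First I would compute the genus of the normalization of $Z' \times_X V_X$ using the Riemann--Hurwitz formula applied to the composite tower. The curve $W' := $ (normalization of $Z' \times_X V_X$) is a cover of both $Z'$ and $V_X$; writing down Riemann--Hurwitz for $W' \to Z'$ (a $G$-cover, étale over the preimage of $\Aff^1_x$, so ramification is concentrated over the points of $Z'$ above $x = \infty$), the genus $g(W')$ is determined by $g(Z')$, $|G|$, and the contributions of the ramification filtrations over the points above $x = \infty$. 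Since $g(Z')$ is fixed, the only thing that can vary is the local ramification behavior of $W' \to Z'$ over infinity.

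The key local computation is at $x = \infty$. The cover $Z' \to \PP^1_x$ is totally ramified at $x = \infty$ with some upper jump $\lambda$, and $V_X \to \PP^1_x$ over $x = \infty$ has a ramification filtration with upper jumps $\mu_1, \ldots, \mu_s$ (the upper jumps of $V_X \to \PP^1_x$ at infinity). The local extension at infinity of the compositum $W'$ is governed by the interaction of a degree-$p$ wildly ramified extension with upper jump $\lambda$ and the given $G$-extension. I would use the fact that when $\lambda$ is \emph{different} from all the $\mu_i$ (and in particular the two towers are ``in general position'' at infinity), the upper jumps of the compositum are determined purely by $\lambda$ and the $\mu_i$ via the Herbrand/Hasse--Arf formalism for upper numbering in a compositum of linearly disjoint local extensions --- upper jumps of $W'_\infty / Z'_\infty$ are obtained from those of $V_{X,\infty}/\PP^1_{x,\infty}$ by the standard change-of-base formula $\psi_{Z'_\infty/\PP^1_{x,\infty}}$, which depends on $\lambda$. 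Since $\lambda$ is itself an invariant of $Z'$ (not varying among the relevant $Z'$, or at least the genus computation only uses $g(Z') = g(Z)$ and $\lambda$), the genus of $W'$ comes out constant. The point of the hypothesis ``$\lambda$ differs from all $\mu_i$'' is precisely to avoid the degenerate case where the conductor can drop (cancellation in the wild part), which is the situation where $g(W')$ could jump up; when there is no coincidence of jumps, the conductor of $W' \to Z'$ at each point over infinity is the ``expected'' one and hence $g(W')$ is the generic value $g(V_Z)$.

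Concretely, the steps I would carry out are: (1) Parametrize $p$-cyclic étale covers $Z' \to \PP^1_x$ of genus $g(Z)$ by Artin--Schreier equations $z^p - z = f(x)$ with $\deg f = r$ fixed (as in the discussion preceding the proposition), noting that the upper jump at infinity is $r$ for such a cover after removing $p$-th-power terms; so actually all $Z'$ in the family have the \emph{same} upper jump $r$ at infinity. (2) Observe that the hypothesis ``$r$ is not among the upper jumps of $V_X \to \PP^1_x$ at $x=\infty$'' is exactly what lets us compute, for each $Z'$ in the family, the ramification filtration of the normalized compositum over each point of $Z'$ lying over $x = \infty$ via the formula for upper jumps in a tower where the bottom $p$-extension has jump $r$: the resulting jumps, and hence the local different exponents, depend only on $r$ and the fixed data of $V_X$. (3) Conclude via Riemann--Hurwitz that $g$(normalization of $Z' \times_X V_X$) is independent of $Z'$ in the family, hence equals $g(V_Z)$ (the value for $Z' = Z$). (4) Invoke Proposition \ref{main.result-affine.line} to conclude $\pi_1(D)$ is effective for $\cE$\eqref{EPline}.

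The main obstacle is step (2): carefully justifying that a ``generic position'' of upper jumps (one $p$-cyclic jump $r$ disjoint from the jumps $\mu_i$ of the $G$-cover) guarantees \emph{no cancellation} in the wild ramification of the compositum over infinity, so that the conductor is the expected one uniformly across the family. This requires care because $Z' \times_X V_X$ may be reducible or non-normal, and one must work with the normalization and track ramification over \emph{each} point of $Z'$ above $x = \infty$ (all conjugate under $G$, so with the same filtration). I expect this to reduce to a local statement about compositums of Galois extensions of $k((1/x))$: if $L/k((u))$ has a single upper jump $r$ and $M/k((u))$ is Galois with upper jumps $\{\mu_i\}$ all $\neq r$, then the upper jumps of $LM/L$ are $\{\psi_{L/k((u))}(\mu_i)\}$ with no coalescence lowering the different --- a consequence of the Hasse--Arf theorem together with the fact that distinct jumps remain distinct under the (piecewise-linear, integer-slope-on-integers) Herbrand function. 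Once this local input is in hand, the global genus count and the appeal to Proposition \ref{main.result-affine.line} are routine.
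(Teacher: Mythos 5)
Your proposal is correct and follows essentially the same route as the paper: both reduce to verifying the constant-genus hypothesis of Proposition \ref{main.result-affine.line} by noting that every $p$-cyclic $Z'$ of the given genus has the same upper jump $r$ at $x=\infty$, and then using the distinctness of $r$ from the jumps of $V_X$ to pin down the ramification of the normalized compositum over infinity uniformly, concluding via Riemann--Hurwitz. The only differences are that the paper obtains your key local statement (the jumps of the compositum are determined when the jumps are in general position) by citing \cite[Corollary 2.5]{ramfiltr} rather than rederiving it from Herbrand/Hasse--Arf, and it settles the reducibility worry you flag by observing that distinct upper jumps force the completed local extensions, hence the function fields $k(V_X)$ and $k(Z')$, to be linearly disjoint over $k(x)$, so the fiber product is irreducible.
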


\begin{proof}
 Note that $Z\to \PP^1_x$ being a $p$-cyclic cover is totally ramified at $x=\infty$. Let $\tau\in Z$ be the point lying above $x=\infty$. Let $\beta_1,\ldots,\beta_l\in V_X$ be the points lying above $x=\infty$ under the morphism $V_X\to \PP^1_x$. Let $I_j\le G$ be the inertia group of $V_X\to \PP^1_x$ at $\beta_j$ for $1\le j\le l$. Since $V_X\to \PP^1_x$ is a Galois cover all the $I_j$'s are conjugates of each other.  Also we know that the degree of the morphism $V_X\to \PP^1_x$ is $|G|$ and the ramification index at $\beta_j$ is $|I_j|=e$ (say). So $|G|=el$. 
 
 Let $\hat R$ be the completion of the stalk of $V_X$ at $\beta_1$, $K$ the fraction field of $\hat R$, $\hat S$ be the completion of the stalk of $Z$ at $\tau$ and $L$ the fraction field of $\hat S$. Then $\Gal(K/k((x^{-1})))=I_1$, $\Gal(L/k((x^{-1})))=\Z/p\Z$. Since the upper jumps of the two local extensions are distinct, $K$ and $L$ are linearly disjoint over $k((x^{-1}))$. Hence $k(V_X)$ and $k(Z)$ are linearly disjoint over $k(x)$. So $V_X\times_{\PP^1_x}Z$ is an irreducible curve with the function field being the compositum $k(V_X)k(Z)$. Let $V_Z$ be the normalization of $V_X\times_{\PP^1_x} Z$. Then $V_Z\to \PP^1_x$ is a $G\times \Z/p\Z$-Galois cover branched only at $\infty$. At a point in $V_Z$ lying above $(\beta_1,\tau)\in V_X\times_{\PP^1_x}Z$, the ramification index of the cover $V_Z\to \PP^1_x$, which can be computed by passing to the completion of stalks at $\beta_1$ of $V_X$ and $\tau$ of $Z$, comes out to be the degree of the field extension $[KL:k((x^{-1}))]$. But this degree equals $pe$, since $K$ and $L$ are linearly disjoint, $[K:k((x^{-1}))]=e$ and $[L:k((x^{-1}))]=p$. Also the degree of the morphism $V_Z\to \PP^1_x$ is $p|G|=pel$. Since $V_Z\to \PP^1_x$ is a Galois cover, there are exactly $l$ points in $V_Z$ over $x=\infty$ one each lying above $(\beta_j,\tau)\in V_X\times_{\PP^1_x} Z$.
 
 Let $u_1,\ldots, u_a$ be the upper jumps of the ramification filtration on the inertia group $I_1$ of the cover $V_X\to \PP^1_x$ and $r$ be the upper jump of the inertia group $\Z/p\Z$ at $\tau$ of the cover $Z\to \PP^1_x$. Since $r$ is different from $u_1,\ldots, u_a$ by \cite[Corollary 2.5]{ramfiltr} the ramification filtration is completely determined by the ramification filtration on $I_1$ and $r$. Let $Z'\to \PP^1_x$ be another $p$-cyclic cover branched only at $x=\infty$ and the genus of $Z'$ is $g(Z)$. Then the upper jump at $x=\infty$ of $Z'\to \PP^1_x$ is also $r$ (since $g(Z)=(p-1)(r-1)/2$ depends only on $r$). Let $V_{Z'}$ be the normalization of $V_X\times_{\PP^1_x} Z'$. We observe that like $V_Z\to \PP^1_x$, the cover $V_Z'\to \PP^1_x$ is branched only at $x=\infty$, like in $V_Z$, there are exactly $l$ points in $V_{Z'}$ lying above $x=\infty$ and the ramification filtration at these $l$ points are same as the ramification filtration on the $l$ points in $V_Z$ lying above $x=\infty$. Since the degree and the ramification behaviour of $V_Z\to \PP^1_x$ and $V_{Z'}\to \PP^1_x$ are same, by Riemann-Hurwitz formula and Hilbert's different formula, the genus $g(V_{Z})=g(V_{Z'})$. The result now follows from Proposition \ref{main.result-affine.line}.
\end{proof}

\begin{cor}\label{cor2:main.result-affine.line}
Let $\cE$\eqref{EPline} be the embedding problem and $V \to \Aff^1_x$ be the $G$-Galois cover corresponding to $\alpha$ as in Proposition \ref{main.result-affine.line}.
Let $Z \to \PP^1_x$ be a $p$-cyclic cover branched only at $x=\infty$ such that $g=g(Z)$ is at least the relative rank of $H/p(H)$ is $\Gamma/p(H)$ and the upper jump of the cover $Z\to \PP^1_x$ at $x=\infty$ is different from all the upper jumps of $V_X\to \PP^1_x$ at $x=\infty$. Let $D\subset Z$ be the complement of points lying above $x=\infty$. 
Then $\pi_1(D)$ is an effective subgroup for the embedding problem $\cE$\eqref{EPline}.
\end{cor}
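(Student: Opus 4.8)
The plan is to deduce Corollary \ref{cor2:main.result-affine.line} directly from Corollary \ref{cor1:main.result-affine.line}, recognizing that the two statements are almost identical except for the choice of which integer plays the role of the lower bound $g$. First I would recall that in Proposition \ref{main.result-affine.line} the relevant hypothesis on $g$ is the existence of a homomorphism $\theta\colon \Pi_g \to H/p(H)$ from the surface group on $g$ generators whose image is a relative generating set for $H/p(H)$ in $\Gamma/p(H)$. I would observe that if $\mu$ denotes the relative rank of $H/p(H)$ in $\Gamma/p(H)$, then there is a relative generating set $\{u_1,\dots,u_\mu\}$ for $H/p(H)$ in $\Gamma/p(H)$ with exactly $\mu$ elements, and since $\Pi_\mu$ surjects onto the free group $F_\mu$ on $\mu$ generators (because a surface group of genus $\mu$ admits an epimorphism onto the free group $F_\mu$, via killing half of the standard generators), one obtains $\theta\colon \Pi_\mu \to H/p(H)$ sending the images of the free generators to $u_1,\dots,u_\mu$; its image is precisely this relative generating set. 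Hence one may take $g = \mu$ in Proposition \ref{main.result-affine.line} and in Corollary \ref{cor1:main.result-affine.line}.

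Next I would set up the specific cover: we are given a $p$-cyclic cover $Z \to \PP^1_x$ branched only at $x=\infty$ with $g(Z) \ge \mu$ and with upper jump at $x=\infty$ distinct from all upper jumps of $V_X \to \PP^1_x$ at $x=\infty$. Defining $D \subset Z$ to be the complement of the points lying above $x=\infty$, we note that $D \to \Aff^1_x$ is an \'etale $p$-cyclic cover (since $Z \to \PP^1_x$ is branched only over $x=\infty$), and its smooth completion is exactly $Z$, so $g(Z) \ge \mu = g$. Thus $D \to \Aff^1_x$ satisfies all the hypotheses of Corollary \ref{cor1:main.result-affine.line}: it is an \'etale $p$-cyclic cover, the genus of its smooth completion is at least $g$, and the upper jump at $x=\infty$ differs from the upper jumps of $V_X \to \PP^1_x$ at $x=\infty$.

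Finally I would simply invoke Corollary \ref{cor1:main.result-affine.line} to conclude that $\pi_1(D)$ is an effective subgroup for the embedding problem $\cE$\eqref{EPline}. The argument is essentially a matter of bookkeeping: unwinding the definition of $D$ from $Z$, checking that the \'etale $p$-cyclic condition and the genus bound transfer correctly, and matching the hypothesis "$g(Z)$ at least the relative rank" against "genus at least $g$" via the choice $g=\mu$ justified above. The only point that requires a small argument is the existence of the homomorphism $\theta$ from $\Pi_\mu$ with image a relative generating set — that is, the remark that a genus-$\mu$ surface group surjects onto the free group of rank $\mu$, which I expect to be the only non-purely-formal step, and even that is standard. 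So there is essentially no serious obstacle here; the corollary is a repackaging of Corollary \ref{cor1:main.result-affine.line} in which the abstract parameter $g$ is replaced by the concrete invariant $\rank_{\Gamma/p(H)}(H/p(H))$.
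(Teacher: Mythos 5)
Your proposal is correct and follows essentially the same route as the paper: both deduce the statement from Corollary \ref{cor1:main.result-affine.line} by producing a homomorphism $\theta$ from a surface group to $H/p(H)$ whose image is a relative generating set, obtained by sending half of the standard generators $A_i$ to the relative generators $u_i$ and the other half $B_i$ to the identity (which kills the surface relator $\prod[A_i,B_i]$). The only cosmetic difference is that the paper instantiates the parameter of Proposition \ref{main.result-affine.line} as $g(Z)$ itself, padding the relative generating set to $g(Z)$ elements, whereas you instantiate it as $\mu=\rank_{\Gamma/p(H)}(H/p(H))$ and use $g(Z)\ge\mu$; these are trivially equivalent.
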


\begin{proof}
 Let $\{a_1,\ldots, a_g\} \subset H/p(H)$ be a relative generating set for $H/p(H)$ in $\Gamma/p(H)$. Note that $\Pi_g$ is the quotient of the free group on $2g$ generators $A_1,\ldots, A_g$, $B_1,\ldots, B_g$ by the subgroup generated by $[A_1,B_1]\cdot[A_2,B_2]\cdots[Ag,B_g]$. So there exist a homomorphism from $\Pi_g$ to $H/pH$ which takes $A_i\to a_i$ and $B_i$ to identity. Hence there exist $\theta:\Pi_g\to H/p(H)$ such that image of $\theta$ is a relative generating set for $H/p(H)$ in $\Gamma/p(H)$. The result now follows from the above corollary.
\end{proof}

\begin{rmk}\label{rmk:main.result-affine.line}
 An \'etale $p$-cyclic cover $D\to \Aff^1_x$ is given by the polynomial equation $z^p-z=f(x)$ where $f(x)$ is polynomial of degree $r$ coprime to $p$. For such a cover the upper jump of the inertia group $\Z/p\Z$ at $x=\infty$ is $r$ and genus of the smooth completion $Z$ of $D$ is $(p-1)(r-1)/2$. So given an \'etale cover $V\to \Aff^1_x$, the hypothesis on the cover $Z\to \PP^1_x$ of the above two corollaries will hold for all but finitely many values of $r$.
\end{rmk}

\section{Existence of degenerations}\label{existdegens}

We will give a few more examples below where degenerations exist.

Let $X$ be a smooth irreducible projective curve over $k$ with a nonempty set of marked points.
Let $C=X\setminus\{\text{the marked points}\}$ and $\Theta:X \to \PP^1_x$ be a finite surjective generically separable morphism 
such that $\Theta^{-1}(x=\infty)$ is any given nonempty subset of $X\setminus C$ and $\Theta$ is \'etale over $x=0$. Such a $\Theta$ exist by Lemma \ref{Noether}.
Let $m$ be the degree of the morphism $\Theta$ and $\{r_1,\ldots, r_m\}=\Theta^{-1}(x=0)$. Let $\Phi:V_X\to X$ be a $G$-cover \'etale over $C$. For a smooth $k$-variety $B$, let $\Psi_B:\scrY\to \PP^1_y\times B$ be a family of smooth covers of $\PP^1_y$ ramified only at $y=0$ and let $m'$ be the number of points in each fiber $\Psi_b:\scrY_b\to \PP^1_y$ lying above $y=0$, for all $b\in B$. Let $S=\Aff^1_t\times B$ and let $F$ be the
closure of the zero locus of $t-xy$ in $\PP^1_x\times\PP^1_y\times S$. Let $X_F=X\times_{\PP^1_x}F$,
$V_{X_F}=V_X\times_X X_F=V_X\times_{\PP^1_x} F$, $\scrY_S=\scrY\times \Aff^1_t=\scrY\times_{\PP^1_y\times B} (\PP^1_y\times S)$, $\scrY_F=\scrY_S\times_{\PP^1_y \times S} F=\scrY\times_{\PP^1_y\times B} F$, $T$ be the normalization of $X_F\times_F \scrY_F$ and $V_T$ be the normalization of $V_{X_F}\times_{X_F}T$ which is same as the normalization of $V_{X_F}\times_F \scrY_F$. Observe that the normalized base change of $\Phi:V_X\to X$ via the morphism $T\to X_F \to X$, is a $G$-cover of $S$-curves $\Phi_T:V_T\to T$.

\[
  \xymatrix{
  &                & V_T \ar[ld]\ar[d]_{\Phi_T}\\
  & V_{X_F}\ar[ld]\ar[d] &  T\ar[ld]\ar[rd]\\
  V_X\ar[d]_{\Phi}  & X_F\ar[ld]\ar[rd] &  & \scrY_F\ar[ld]\ar[rd]\\
    X\ar[rd]& & F\subset \PP^1_x\times \PP^1_y\times S \ar[rd]\ar[ld]& & \scrY_S\ar[r]\ar[ld] & \scrY\ar[ld]^{\Psi_B}\\
    & \PP^1_x &  & \PP^1_y\times S\ar[r] & \PP^1_y\times B
  }
 \]

Let $\Phi':V'\to T'$ be the $G$-cover of $B$-curves obtained by looking at the fiber of $\Phi_T:V_T\to T$ over $t=0$. For $b\in B$, let $\Phi'_b:V_{0,b}\to T_{0,b}$ denote the fiber of $\Phi'$ at $b$. 

\begin{pro}\label{degeneration.highergenus}
Let the setup be as above.
 For a closed point $s=(a,b)\in S=\Aff^1_t\times B$, let $\Phi_s$ be the fiber of $\Phi_T$ at $s$. The set 
 $$U=\{s\in S: \Phi_s \text{ is a $G$-cover of smooth irreducible curves} \}$$ 
 is a nonempty open subset of $S$. Moreover for $s=(a,b)\in U$, $\Phi'_b$ is a degeneration of $\Phi_s$ and the fiber of $T$ at $(t=0,b)$ $T_{0,b}$ consist of $m$ copies of $\scrY_b$ which are trivial components of $\Phi'_b$. Also the cover $T_s\to X$ is \'etale over $C$.
\end{pro}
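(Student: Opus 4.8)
The plan is to verify each of the three assertions of Proposition~\ref{degeneration.highergenus} in turn, following essentially the same pattern as the proof of Proposition~\ref{main.result-affine.line} but now keeping track of the extra parameter space $B$. First I would address the openness and nonemptiness of $U$. The morphism $\scrY_F \to F$ is a family of covers of $\PP^1_y$ ramified only at $y=0$, and on the locus $t \neq 0$ the equation $t = xy$ lets one eliminate $y$ in favour of $x$, so $F \cap \{t\neq 0\}$ projects isomorphically to $\PP^1_x \times S^o$ where $S^o = S \setminus \{t=0\}$. Thus over $S^o$ the family $T \to X_F \to X$ restricts to a family of $p$-power-ish covers of $\PP^1_x$ (really, covers obtained by the coordinate change from $\scrY$), and $V_T \to T$ is the normalized pullback of the fixed $G$-cover $\Phi$. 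Since $V_T$ is normal (being a normalization) and since generically on $S$ the fibers of $T$ and of $V_T$ are smooth irreducible, I would invoke Lemma~\ref{smoothfibers} — after first passing to a stratification of $S$ on which the genus of the normalization of the fibers of $V_T$ (resp.\ of $T$) is constant — to conclude that on a nonempty open $U \subseteq S$ the fiber $\Phi_s$ is a $G$-cover of smooth irreducible curves. Irreducibility of the total spaces, hence connectedness of the generic fiber, follows because $\Phi$ is a $G$-cover (so $V_X$ is irreducible) and the base changes involved are along dominant morphisms; Zariski's connectedness theorem then upgrades "generic fiber connected" to "every fiber in $U$ connected", exactly as in the proof of Proposition~\ref{ALtrick}.

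Next I would identify the special fiber at $t=0$. On $F$, setting $t=0$ forces $xy=0$, so the fiber of $F$ over $(t=0,b)$ is the union of $\{x=0\}$ (a copy of $\PP^1_y$) and $\{y=0\}$ (a copy of $\PP^1_x$), meeting transversally at $(x=0,y=0)$. Pulling this back along $X_F = X\times_{\PP^1_x} F \to F$: over the $\{y=0\}$ component we recover $X$ (since $\Theta$ has degree $m$, but the $\PP^1_x$ here is the target, so actually $X_F$ restricted there is $X$ itself), while over the $\{x=0\}$ component we get $m$ copies of $\PP^1_y$, one for each of the $m$ points $r_1,\dots,r_m \in \Theta^{-1}(x=0)$, because $\Theta$ is \'etale over $x=0$. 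Then taking the normalized fiber product with $\scrY_F$: over the $\PP^1_x = X$-component, $\scrY_F$ contributes the trivial cover (the equation $y=0$ is the branch locus of $\scrY$, and away from it, well, one must check that on this component $\scrY_F$ is the unramified base change — this is where $\Theta$ \'etale over $x=0$ is used), so $T_{0,b}$ on that component is just $X$; over each of the $m$ copies of $\PP^1_y$ sitting above $r_i$, we get a copy of $\scrY_b$. Hence $T_{0,b} = X \cup (\text{$m$ copies of }\scrY_b)$, glued transversally at the points above $x=0$. Then $V' = V_T|_{t=0}$: since $\Phi: V_X \to X$ is \'etale at the points over $x=0$, the normalized pullback over the $X$-component is just $V_X$, and over each copy of $\scrY_b$ the pullback of an \'etale $G$-cover is $\Ind_e^G \scrY_b$. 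So $V_{0,b} = V_X \cup (\text{copies of }\Ind_e^G\scrY_b)$, meeting $V_X$ at the $|G|$ preimages of $x=0$ — this is exactly an SNC configuration, and each copy of $\scrY_b$ is a trivial component of $\Phi'_b$ in the sense of the degeneration definition. Finally, that $\Phi'_b$ is a \emph{deformation} of $\Phi_s$ for $s=(a,b)\in U$ with $a\neq 0$: the associated data is $(S;\ \Phi_T: V_T\to T;\ $ the $r$ sections coming from the points over $x=\infty;\ s;\ s'=(0,b))$, which is legitimate after restricting $S$ to a smooth curve through $s$ and $s'$ via Remark~\ref{curve} — one checks $\Phi_T$ is \'etale away from those sections, which is the content of the branch-locus bookkeeping that $\scrY$ is ramified only at $y=0$ and $\Phi$ only at $X\setminus C$.

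The last clause, that $T_s \to X$ is \'etale over $C$, I would handle by tracking branch loci: $T_s \to X$ factors through $T_s \to X_{F,s} \cong X$ (the iso because $a\neq 0$) and the branching can only come from the branching of $\scrY$, which sits over $y=0$, i.e.\ over $x=\infty$ after the coordinate change $t=xy$; but $\Theta^{-1}(x=\infty) \subseteq X\setminus C$ by construction, so $T_s\to X$ is unramified over $C$. I expect the main obstacle to be the second paragraph: carefully establishing the precise structure of the degenerate fiber at $t=0$ — in particular verifying that over the $X$-component the $\scrY_F$-part really does contribute only a trivial (unramified, split) cover and that the normalization does not introduce unexpected identifications or ramification — requires a genuinely local analysis at the node $(x=0,y=0)$ and at the points above it, using the \'etaleness of both $\Theta$ and $\Phi$ over $x=0$. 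Once that local picture is pinned down, everything else (openness of $U$, the deformation data, the \'etale-over-$C$ statement) is bookkeeping of the type already carried out in Sections~\ref{Formal Patching}--\ref{Affine Line}.
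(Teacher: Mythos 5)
The overall architecture of your argument matches the paper's (compute the special fibre at $t=0$, establish genericity of smoothness and irreducibility, track branch loci for the \'etale-over-$C$ statement), but there is a genuine gap at the central technical point: the irreducibility of the fibres $T_s$ and $V_{T,s}$ for $s$ away from $t=0$. You assert that ``irreducibility of the total spaces, hence connectedness of the generic fiber, follows because $\Phi$ is a $G$-cover (so $V_X$ is irreducible) and the base changes involved are along dominant morphisms.'' Base change along a dominant morphism does not preserve irreducibility: away from $t=0$ the fibre $T_s$ is (the normalization of) a fibre product of the two covers $X\to\PP^1_x$ and $\scrY_b\to\PP^1_x$ (the latter via the substitution $y=t/x$), and such a fibre product is irreducible only when the corresponding function fields are linearly disjoint over $k(x)$ --- which can perfectly well fail for particular covers and particular values of $t$. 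The paper's proof devotes its main Claim to exactly this point: it shows that for $t$ \emph{transcendental} the fields $k(t)(X)$ and $k(t)(\scrY_b)$ (with $y=t/x$) are linearly disjoint over $k(t,x)$, by writing $k(\scrY_b)=k(y)[\alpha]$ with minimal polynomial $f(y,Z)$, substituting $y=t/x$, and proving that $f(t/x,Z)$ remains irreducible over $k(t)(X)$ by specializing $x$ to a constant $\gamma$ and using that $f(t/\gamma,Z)$ is irreducible over $k(t)$; the same argument with $X$ replaced by $V_X$ handles $V_{T,\eta_b}$. Without some such argument the nonemptiness of $U$ is unproved. Note also that irreducibility is only generic in $t$, so one must work over the point $\eta_b=b\times\spec(k(t))$ and then spread out to a nonempty open $U_b\subset\Aff^1_t$, as the paper does; your appeal to Zariski's connectedness theorem does not help, since $U$ is \emph{defined} as the locus of smooth irreducible fibres and the issue is precisely to show that this locus is nonempty.

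A secondary error: your description of the special fibre as $T_{0,b}=X\cup(\text{$m$ copies of }\scrY_b)$ is wrong in general. Over the component $\{y=0\}\cong\PP^1_x$ of $F_{0,b}$, the cover $\scrY_F$ restricts to the fibre of $\scrY_b\to\PP^1_y$ over $y=0$, which consists of $m'$ points; so $T_{0,b}$ contains $m'$ copies of $X$ (not one), the $j$-th copy of $X$ meeting the $i$-th copy of $\scrY_b$ at the single point $(r_i,s_j)$. Your picture is correct only when $m'=1$, e.g.\ in the totally ramified $p$-cyclic situation of Section \ref{Affine Line}. The remaining ingredients of your proposal --- the regularity of $\bar T$ along $t=0$ so that normalization changes nothing there, the triviality of the $G$-cover on the $\scrY_b$-components coming from \'etaleness of $\Phi$ and $\Theta$ over $x=0$, and the branch-locus bookkeeping showing $T_s\to X$ is \'etale over $C$ --- do agree with the paper's proof.
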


\begin{proof}
 The fiber $F_{0,b}$ of $F\to S$ at $(t=0,b)\in S$ for any $b\in B$ is $\PP^1_x$ and $\PP^1_y$ intersecting transversally at $x=y=0$. Let $\Psi_b^{-1}(y=0)=\{s_1,\ldots,s_{m'}\}$.  
 The fiber $T_{0,b}$ of $T\to S$ at $(0,b)$ consist of $m$ copies of $\scrY_b$ and $m'$ copies of $X$ where each copy of $X$ intersect each copy of $\scrY_b$ at exactly one point. This can be seen as follows. Let $\bar T=X_F\times_F\scrY_F$ and $T\to \bar T$ be the normalization morphism.
 \begin{align*}
  \bar T_{0,b}&=\bar T\times_S (0,b)\\
	 &=(X_F\times_F \scrY_F)\times_S(0,b)\\
	 &=(X\times_{\PP^1_x}F)\times_F(F\times_{\PP^1_y\times S}\scrY_S)\times_S(0,b)\\
	 &=(X\times_{\PP^1_x}F)\times_F(F_{0,b}\times_{\PP^1_y\times (0,b)}\scrY_b)\\
	 &=X\times_{\PP^1_x}(F\times_F ((\PP^1_x\cup\PP^1_y \text{ intersecting transversally at } x=y=0) \times_{\PP^1_y}\scrY_b))\\
	 &=X\times_{\PP^1_x}(\scrY_b \cup m' \text{ copies of $\PP^1_x$  where $j^{\text{th}}$ copy of $\PP^1_x$ intersect $\scrY_b$ at exactly}\\
	 &\text{one point } (x=0,s_j) \in \PP^1_x \times \scrY_b, 1\le j \le m') \\
	 &=\text{ $m$ copies of $\scrY_b$ and $m'$ copies of $X$ where the $j^{\text{th}}$ copy of $X$ intersect the }\\
	 &i^{\text{th}} \text{ copy of $\scrY_b$ at exactly one point } (r_i,s_j)\in X\times \scrY_b, 1\le i\le m, 1\le j \le m'.
 \end{align*}
 
  Note that $\cO_F=(\cO_{\PP^1_x}\otimes_k\cO_{\PP^1_y}\otimes_kk[t]\otimes_k \cO_B)/(t-xy)$. So $\cO_{\bar T}=(\cO_X\otimes_k\cO_{\scrY}\otimes_kk[t])/(t-xy)$ since $\cO_X$ is a flat $\cO_{\PP^1_x}$-algebra and $\cO_{\scrY}$ is a flat $\cO_{\PP^1_y}\otimes_k\cO_B$-algebra. So $\bar T$ is regular at any closed point in $t=0$ locus. Hence the normalization morphism $T\to \bar T$ is an isomorphism at these points. So $T_{0,b}=\bar T_{0,b}$.

 Note that $\Phi_T$ is the normalized base change of $V_X\to X$ and the fiber of $T\to S$ over $(t=0,b)$ consist of components isomorphic to $X$ and $\scrY_b$. So for the $G$-cover $\Phi'_b:V_{0,b}\to T_{0,b}$ each copy of $\scrY_b$ is a trivial component of $\Phi'_b$. 

 Let $\eta=S\times_{\Aff^1_t}\spec(k(t))=B\times \spec(k(t))$. For any closed point $b\in B$, let $\eta_b=b\times \spec(k(t))$ denote the corresponding closed point of $\eta$. 
 \begin{clm}
  Over the point $\eta_b$ of $S$, $V_{T,{\eta_b}}\to T_{\eta_b}$ is a  $G$-cover of irreducible $\eta_b$-curves.  
 \end{clm}
 \begin{proof}
  Note that
  \begin{align*}
    T_{\eta_b}&=T\times_S\eta_b=(\scrY_F\times_F X_F)\times_S \eta_b\\
            &=(\scrY_b\otimes_kk(t))\times_{F\times_S \eta_b} (X\otimes_kk(t))  
  \end{align*}
  So show that $T_{\eta_b}$ is irreducible, it is enough to show that the function fields of the covers $\scrY_b\otimes_k k(t)$ and $X\otimes_k k(t)$ of $F\times_S \eta_b$ are linearly disjoint over the function field of $F\times_S\eta_b$.
  Also note that the composition of morphisms $\scrY_b\otimes_kk(t)\to F\times_S\eta_b\xrightarrow{\sim}\PP^1_y\otimes_kk(t)$ is the extension of base field of $\scrY_b\to \PP^1_y$ to $k(t)$ and similarly $X\otimes_kk(t)\to F\times_S\eta_b\xrightarrow{\sim}\PP^1_x\otimes_kk(t)$ is the extension of base field of $X\to \PP^1_x$ to $k(t)$. Moreover the composition of isomorphism $\PP^1_y\otimes_kk(t)\to F\times_S \eta_b \to \PP^1_x\otimes_kk(t)$ which we will call $\Theta$ is given by $y\mapsto t/x$.
 
 Let $\alpha \in k(\scrY_b)$ be such that $k(\scrY_b)=k(y)[\alpha]$ and $f(y,Z)$ be the minimal polynomial of $\alpha$ in $k(y)[Z]$. We have a cover $\scrY_b\otimes_kk(t)\to \PP^1_x\otimes_kk(t)$ via the isomorphism $\Theta$. Consider the resulting field extension $L_2=k(t)(\scrY_b)$ of $k(t,x)$. We also have a field extension $L_1=k(t)(X)$ of $k(t,x)$ obtained from the morphism $X\to \PP^1_x$ base changed to $k(t)$. To see that $T_{\eta_b}$ is irreducible, it is enough to show that $L_1$ and $L_2$ are linearly disjoint over $k(t,x)$. Note that $L_2\cong k(t,y)[\alpha]$, hence viewing $L_2$ as an extension $k(t,x)$, we get that $L_2=k(t,x)[\alpha']$ where $\alpha'$ is a root of the irreducible polynomial $f(t/x,Z)$ in $k(t,x)[Z]$. We observe that $L_1$ and $L_2$ are linearly disjoint over $k(t,x)$ iff $[L_1L_2:L_1]=[L_2:k(t,x)]=\deg_Z(f(t/x,Z))$. But $L_1L_2=L_1[\alpha']$, so it is enough to show $f(t/x,Z)$ is irreducible in $k(t)(X)[Z]$. Let $f(t/x,Z)=Z^n+a_{n-1}(t/x)Z^{n-1}+\ldots+a_0(t/x)$ and nonzero $\gamma\in k$ be such that $x=\gamma$ is not a pole of $a_0(t/x),\ldots, a_{n-1}(t/x)$. Let $\tilde \gamma\in X$ be a closed point lying above the closed $x=\gamma$ of $\PP^1_x$. Then at point $\tilde \gamma$ the polynomial $f(t/x,Z)$ reduces to $f(t/\gamma,Z)\in k(t)[Z]$. Since $f(y,Z)$ is irreducible in $k(y)[Z]$, $f(t/\gamma,Z)$ is irreducible in $k(t)[Z]$. Hence $f(t/x,Z)$ is also irreducible in $k(t)(X)[Z]$.
 
 The proof of the irreducibility of $V_{T,\eta_b}$ is also similar and can be obtained by replacing $X$ by $V_X$ in the above argument.
 \end{proof}
 
 From the claim it follows that for any $b\in B$ there exist a nonempty open subset $U_b$ of $\Aff^1_t$ such that the fiber of $\Phi_T$ over $(a,b)\in S$ for any closed point $a$ of $U_b$ is a $G$-cover of irreducible curves. Note that $T$ and $V_T$ are normal, hence so is $T_\eta$ and $V_{T,\eta}$. So most of the fibers of $\Phi_T$ is a cover of smooth irreducible curves. Hence $U$ is a nonempty open set.

 Finally there is a morphism $T_{\eta} \to X_{\eta}=X\times_k\eta$ coming from the morphism $T\to X_F$ which is \'etale away from $y=0$. But the rational function $y$ on $X_{\eta}$ is same as $t/x$. So $T_{\eta}\to X_{\eta}$ is \'etale away from points lying over $x=\infty$. In other words, $T_{\eta}\to X_{\eta}$ is \'etale over $C\times_k \eta$.

 So for any closed point $(a,b)$ of $U\subset S=\Aff^1_t\times B$, $\Phi'_b$ is a degeneration of the $G$-cover 
 $\Phi_{a,b}:V_{a,b}\to T_{a,b}$ and $T_{a,b}\to X$ is a smooth irreducible cover \'etale over $C$ (of the same degree as $\scrY_b\to \PP^1_y$).
\end{proof}

\begin{cor} \label{cor:degeneration.highergenus}
 Let $\cE$\eqref{EPcurve} be the embedding problem 
 \begin{equation}\label{EPcurve}
  \xymatrix{
    &          &               &\pi_1(C) \ar[d]^{\alpha} \ar @{-->} [dl]\\
    1\ar[r] & H \ar[r] & \Gamma \ar[r]^{\phi} & G \ar[r]\ar[d] & 1\\
    & & & 1 }
\end{equation}
Let $X$ be the smooth completion of $C$ and $\Phi:V_X\to X$ be a $G$-cover \'etale over $C$ corresponding to $\alpha$. Let the notation and hypothesis be as in Proposition \ref{degeneration.highergenus}. Let $s=(a,b)\in U\subset S$ be a fixed point, $D\subset T_s$ be the  preimage of $C$ under the morphism $T_s\to X$ and $g$ be the genus of $\scrY_b$. Then $\pi_1(D)$ is an effective subgroup of $\pi_1(C)$ for the given embedding problem if there exist a homomorphism $\theta:\Pi_g^m\to H/p(H)$ with $\image(\theta)$ a relative generating subset of $H/p(H)$ in $\Gamma/p(H)$. Here $\Pi_g$ is the surface group of genus $g$.
\end{cor}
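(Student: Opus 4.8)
The idea is to apply Theorem~\ref{EPsoln_eff_degeneration} to the affine curve $D$ rather than to $C$, taking as input the $G$-cover $\Phi_s\colon V_s\to T_s$ and the degeneration $\Phi'_b$ produced by Proposition~\ref{degeneration.highergenus}. First I would collect the relevant output of that proposition for the fixed $s=(a,b)\in U$: the curve $T_s$ is smooth and irreducible, the morphism $T_s\to X$ is finite and \'etale over $C$, so $D\subset T_s$ is a smooth irreducible affine curve and $\pi_1(D)$ is a subgroup of $\pi_1(C)$ of finite index; the cover $\Phi_s$ is the normalized pullback of $\Phi\colon V_X\to X$ along $T_s\to X$, it is a $G$-cover of smooth irreducible curves, and it is \'etale over $D$. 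Since $V_s$ is irreducible, its preimage over $D$ is connected, and that preimage is precisely the pullback of the $G$-cover attached to $\alpha$; hence $\alpha(\pi_1(D))=G$, i.e.\ the embedding problem $\cE$\eqref{EPcurve} restricts to $\pi_1(D)$, and $\Phi_s$ is the $G$-cover of $T_s$, \'etale over $D$, corresponding to $\alpha|_{\pi_1(D)}$. Finally, by Proposition~\ref{degeneration.highergenus} the cover $\Phi_s$ admits the degeneration $\Phi'_b\colon V_{0,b}\to T_{0,b}$ in which $m$ copies of $\scrY_b$, each of genus $g$, occur as trivial components.

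The second step is to pin down the number $r_i$ of smooth marked points of $T_{0,b}$ lying on the $i$-th copy of $\scrY_b$. The marked points of $T_s$ are the points lying over $X\setminus C$, which is the branch locus of $\Phi_s$. Passing to the degenerate fibre, each copy of $X$ inside $T_{0,b}$ maps isomorphically onto $X$, while each copy of $\scrY_b$ is contracted to a point of $\Theta^{-1}(x=0)$; these points lie in $C$, because the copies of $\scrY_b$ are trivial components of $\Phi'_b$ and so $\Phi$ is \'etale over their images. Hence all marked points of $T_{0,b}$ lie on the $X$-components and none on the $\scrY_b$-components, so $r_i=0$ for $i=1,\dots,m$.

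Next I would construct the homomorphism demanded by Theorem~\ref{EPsoln_eff_degeneration}. As $r_i=0$, the group in question is $\pi_1(\scrY_b)\times\cdots\times\pi_1(\scrY_b)$ ($m$ factors), and each $\pi_1(\scrY_b)$ surjects onto the maximal prime-to-$p$ quotient of the surface group $\Pi_g$ by \cite{SGA1}. Taking the $m$-fold product of such a surjection and composing with the hypothesized $\theta\colon\Pi_g^m\to H/p(H)$ (which factors through that prime-to-$p$ quotient since $H/p(H)$ has order prime to $p$) produces $\tilde\theta\colon\pi_1(\scrY_b)^m\to H/p(H)$ with $\image(\tilde\theta)=\image(\theta)$; in particular $\image(\tilde\theta)$ is a relative generating set for $H/p(H)$ in $\Gamma/p(H)$, and each $H_i:=\tilde\theta(\pi_1(\scrY_b))$ is a finite prime-to-$p$ quotient of $\pi_1(\scrY_b)$, hence realized by an \'etale $H_i$-cover of $\scrY_b$.

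Finally I would invoke Theorem~\ref{EPsoln_eff_degeneration} with $D$, $T_s$, $\Phi_s$ in the roles of $C$, $X$, $\Phi$, with the degeneration $\Phi'_b$, trivial components $X_1=\cdots=X_m=\scrY_b$, marked-point counts $r_i=0$, and the homomorphism $\tilde\theta$. This yields a $\Gamma$-cover $W\to T_s$ dominating $\Phi_s$ and \'etale over $D$; its restriction over $D$ is a $\Gamma$-Galois cover of $D$ dominating the $G$-cover attached to $\alpha|_{\pi_1(D)}$, i.e.\ a proper solution of $\cE$\eqref{EPcurve} restricted to $\pi_1(D)$. Thus $\pi_1(D)$ is an effective subgroup. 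I expect the only genuinely delicate point to be the combinatorial bookkeeping of the second step, namely checking that the branch locus of $\Phi_s$ specialises onto the $X$-components of $T_{0,b}$ (so that $r_i=0$), which hinges on the $\scrY_b$'s being trivial components and therefore being contracted to points of $C$; everything else is an assembly of Proposition~\ref{degeneration.highergenus}, \cite{SGA1}, and Theorem~\ref{EPsoln_eff_degeneration}.
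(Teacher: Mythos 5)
Your argument is correct and takes essentially the same route as the paper: its proof likewise consists of citing Proposition \ref{degeneration.highergenus} for the degeneration of $\Phi_s$ with $m$ trivial components isomorphic to $\scrY_b$, observing that the prime-to-$p$ part of $\pi_1(\scrY_b)$ coincides with that of the profinite completion of $\Pi_g$, and then invoking Theorem \ref{EPsoln_eff_degeneration}. Your write-up simply makes explicit the bookkeeping (that $\cE$\eqref{EPcurve} restricts to $\pi_1(D)$, and that $r_i=0$ on the $\scrY_b$-components) which the paper leaves implicit.
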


\begin{proof}
 By the above proposition $\Phi_s$ has a degeneration to $\Phi'_b$ with $m$ copies of $\scrY_b$ as trivial components. Also prime to  $p$ part of $\pi_1(\scrY_b)$ is the prime to $p$ part of the profinite completion of $\Pi_g$. Hence the corollary follows from Theorem \ref{EPsoln_eff_degeneration}.
\end{proof}

\begin{cor}\label{general.case}
 Let $\Pi$ be an index $p$-normal subgroup of $\pi_1(C)$ and $D\to C$ be the corresponding \'etale cover. Consider the embedding problem $\cE$\eqref{EPcurve} in the above corollary and let $V\to C$ be the $G$-Galois cover corresponding to $\alpha$. Let $X$, $V_X$ and $Z$ be the smooth completion of $C$, $V$ and $D$ respectively. Suppose there exist a separable cover $\theta:X\to \PP^1_x$ \'etale over $x=0$ with $\theta^{-1}(x=\infty)\cap C$ empty and a $p$-cyclic cover $Y\to \PP^1_x$ branched only at $x=\infty$ such that the normalization of the cover $X\times_{\PP^1_x}Y\to X$ is same as the cover $Z\to X$. Also assume that the genus $g_Y$ is at least the relative rank of $H/p(H)$ in $\Gamma/p(H)$ and the upper jump of $Y\to \PP^1_x$ is different from all the upper jumps of $V_X\to \PP^1_x$ at all the points of $V_X$ lying above $x=\infty$. Then $\Pi$ is an effective subgroup of $\pi_1(C)$ for the embedding problem \eqref{EPcurve}. 
\end{cor}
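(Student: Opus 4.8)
The plan is to deduce this from Corollary~\ref{cor:degeneration.highergenus} by exhibiting the given data as a fibre of the construction in Proposition~\ref{degeneration.highergenus}. For the base cover I would use the given $\theta\colon X\to\PP^1_x$ (which exists by Lemma~\ref{Noether}, with $\theta^{-1}(x=\infty)\cap C=\emptyset$), and for the family $\Psi_B\colon\scrY\to\PP^1_y\times B$ I would take the Artin--Schreier family of $p$-cyclic covers of $\PP^1_y$ branched only at $y=0$ of genus $g_Y$: in the form $z^p-z=h(y^{-1})$ with $h=c_dw^d+\dots+c_1w$, $c_d\ne 0$, $c_{pi}=0$, and $d$ coprime to $p$ (so that $g_Y=(p-1)(d-1)/2$ is forced), the parameter $h$ ranges over a smooth irreducible variety $B$, and each such cover is totally ramified over $y=0$, so $m'=1$. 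Forming $S=\Aff^1_t\times B$, $F$, $X_F$, $T$ and $V_T$ as in Proposition~\ref{degeneration.highergenus}, recall that for a point $s=(a,b)\in S$ with $a\ne 0$ the fibre $T_s\to X$ is the normalisation of $X\times_{\PP^1_x}\scrY_b$, where $\scrY_b$ is regarded as a cover of $\PP^1_x$ via the substitution $y=a/x$.

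Since the given $Y\to\PP^1_x$ is a $p$-cyclic cover branched only at $x=\infty$ of genus $g_Y$, it may be written as $z^p-z=h_0(x)$ with $\deg h_0=d$ coprime to $p$; I would choose $b_0\in B$ with $\scrY_{b_0}\colon z^p-z=h_0(y^{-1})$ and set $s_0=(1,b_0)$, so that $y=1/x$ identifies $\scrY_{b_0}$ (over $\PP^1_x$) with $Y$. By the description of the fibres above, $T_{s_0}\to X$ is then the normalisation of $X\times_{\PP^1_x}Y\to X$, which by hypothesis is the cover $Z\to X$; hence the preimage $D\subset T_{s_0}$ of $C$ is precisely the \'etale cover of $C$ corresponding to $\Pi$, and $g=g(\scrY_{b_0})=g_Y$. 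It therefore suffices to verify that $s_0\in U$ and that the group-theoretic hypothesis of Corollary~\ref{cor:degeneration.highergenus} is met.

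The group-theoretic input is easy: since $g_Y$ is at least the relative rank $\mu$ of $H/p(H)$ in $\Gamma/p(H)$, I would fix a relative generating set $a_1,\dots,a_\mu$ of $H/p(H)$ in $\Gamma/p(H)$ and define $\theta\colon\Pi_{g_Y}^m\to H/p(H)$ to send the first $\mu$ standard generators of the first copy of $\Pi_{g_Y}$ to $a_1,\dots,a_\mu$ and every other standard generator, in every copy, to the identity; all surface relations are then satisfied trivially and $\image(\theta)=\langle a_1,\dots,a_\mu\rangle$ is a relative generating set for $H/p(H)$ in $\Gamma/p(H)$ (this is the argument of Corollary~\ref{cor2:main.result-affine.line}). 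The main obstacle is showing $s_0\in U$, i.e.\ that $\Phi_{s_0}\colon V_{T,s_0}\to T_{s_0}=Z$ is a $G$-cover of smooth irreducible curves. As $Z$ is smooth and irreducible and $V_{T,s_0}$ is the normalised pull-back $V_Z$ of $V_X\to X$ along $Z\to X$, this reduces to the irreducibility of $V_X\times_X Z$; and since $k(Z)/k(X)$ is cyclic of degree $p$ with $k(Z)=k(X)\cdot k(Y)$, that irreducibility is equivalent to $k(Y)\not\subseteq k(V_X)$, i.e.\ to $Y\to\PP^1_x$ not being dominated by $V_X\to\PP^1_x$.

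To rule the latter out I would argue locally over $x=\infty$, exactly as in the proof of Corollary~\ref{cor1:main.result-affine.line}: if $k(Y)\subseteq k(V_X)$, pick a point $\beta\in V_X$ above $x=\infty$ and pass to the completed local rings over $k[[x^{-1}]]$; then the $\Z/p\Z$-extension coming from $Y$ (with upper break $d$, the upper jump of $Y\to\PP^1_x$ at $x=\infty$) sits inside the local extension at $\beta$, so by Herbrand's theorem $d$ occurs among the upper jumps of $V_X\to\PP^1_x$ at $\beta$, contradicting the hypothesis that the upper jump of $Y\to\PP^1_x$ at $x=\infty$ differs from all the upper jumps of $V_X\to\PP^1_x$ at points above $x=\infty$. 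Hence $V_X\times_X Z$ is irreducible, so $s_0\in U$ (and, incidentally, $\cE$\eqref{EPcurve} restricts to $\Pi$). Applying Corollary~\ref{cor:degeneration.highergenus} at $s_0$ then gives that $\pi_1(D)=\Pi$ is effective for $\cE$\eqref{EPcurve}, as desired.
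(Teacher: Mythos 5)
Your overall strategy is the paper's: realize $Z\to X$ as a fibre $T_{s_0}\to X$ of the family built in Proposition \ref{degeneration.highergenus} from the Artin--Schreier family over $B$, check that $s_0\in U$, supply the homomorphism $\theta\colon\Pi_{g_Y}^m\to H/p(H)$ as in Corollary \ref{cor2:main.result-affine.line}, and invoke Corollary \ref{cor:degeneration.highergenus}. The group-theoretic paragraph and the choice of family are fine. But there is a genuine gap in your verification that $s_0\in U$.

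You write that ``$V_{T,s_0}$ is the normalised pull-back $V_Z$ of $V_X\to X$ along $Z\to X$,'' and likewise that $T_{s_0}$ is the normalization of $X\times_{\PP^1_x}Y$, and from this you reduce membership in $U$ to the irreducibility of $V_X\times_X Z$. That identification is exactly what needs proof: $T$ and $V_T$ are normalizations of the total spaces of the families, and the fibre of a normal family at a closed point of $S$ need not be normal --- it can acquire singularities precisely when the genus of the normalization of the fibre drops. Membership in $U$ requires the fibre $\Phi_{s_0}$ itself to be a cover of \emph{smooth} irreducible curves, since the degeneration structure of Proposition \ref{degeneration.highergenus} lives in the family $\Phi_T$. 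Irreducibility of $V_X\times_X Z$ (which your local argument at $x=\infty$ correctly extracts from the upper-jump hypothesis) does not rule out such a drop. The paper closes this gap by using the upper-jump hypothesis a second time: by \cite[Corollary 2.5]{ramfiltr} the ramification filtration of the compositum at the points over $x=\infty$ is determined by the filtration of $V_X\to\PP^1_x$ and the single upper jump $r$ of $\scrY_s\to\PP^1_x$, which is the same for all $s=(t,b)$ with $t\ne 0$; hence by Riemann--Hurwitz the genus of the normalizations of $T_s$ and $V_{T,s}$ is constant on $S\setminus\{t=0\}$, and Lemma \ref{smoothfibers} then gives $U=S\setminus\{t=0\}$, so in particular the point matching $Z$ lies in $U$. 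Your proof uses the upper-jump hypothesis only for linear disjointness and omits this constancy-of-genus step, so as written it does not establish $s_0\in U$.
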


\begin{proof}
 Since $Y\to \PP^1_x$ is a $p$-cyclic cover, it is given by an Artin-Schreier polynomial $z^p-z-f(x)$ where $f(x)$ is a polynomial of degree $r$ for some $r$ coprime to $p$. Let $f(x)=a_rx^r+a_{r-1}x^{r-1}+\ldots+a_0$ with $a_r\ne 0$. Let $B=\Aff^r$ and $\scrY\to B\times \PP^1_y$ be the cover given by $z^p-z-(y^{-r}+b_{r-1}y^{-r+1}+\ldots+b_0)$ where $b_i$'s are coordinates of $B$. Note that this is a family of $p$-cyclic covers of $\PP^1_y$ branched only at $y=0$.
 
 Let $S=B\times \Aff^1_t$, $F$, $T$, $\Phi_T:V_T\to T$, etc. be defined as in the setup before Proposition \ref{degeneration.highergenus}. Let $U=\{s\in S:\Phi_s$ is a cover of smooth curves $\}$ also be as in Proposition \ref{degeneration.highergenus}.
 
 Note that $\Phi_s$ is the fiber of $\Phi_T:V_T\to T$ where $V_T$ and $T$ are the normalization of $V_{X_F}\times_F \scrY_F$ and $X_F\times_F \scrY_F$ respectively. So for $s=(t,b)\in U$ with $b=(\beta_{r-1},\ldots, \beta_0)$, $T$ is the normalization of $X\times_{\PP^1_x} \scrY_s$ where $\scrY_s\to \PP^1_x$ is given by $$z^p-z-(t^{-r}x^r+\beta_{r-1}t^{-r+1}x^{r-1}\ldots+\beta_0).$$
 
 By the hypothesis on the upper jumps of $V_X\to \PP^1_x$ and the upper jump of $Y\to \PP^1_x$ which is same as the upper jump of $\scrY_s\to \PP^1_x$, we obtain that for every point $s=(t,b)\in S$ with $t\ne 0$ the normalization of $T_s$ and $V_{T,s}$ have constant genus (i.e. independent of $s$). Hence by Lemma \ref{smoothfibers} $U=S\setminus \{t=0\}$. So by Corollary \ref{cor:degeneration.highergenus} we conclude that $\pi_1(T_s\setminus \{\text{points lying above $x=\infty$}\})$ is an effective subgroup of $\pi_1(C)$ for all $s=(t,b)\in S$ with $t\ne 0$. By an appropriate choice of $t$ and $b$ one can arrange that the cover $T_s\to X$ is same as $Z\to X$. Hence $\pi_1(D)$ is an effective subgroup of $\pi_1(C)$.
\end{proof}

\end{document}